\newtheorem{thm}{Theorem}[section]
\newtheorem{cor}[thm]{Corollary}
\newtheorem{lem}[thm]{Lemma}
\newtheorem{prop}[thm]{Proposition}
\newtheorem{defn}[thm]{Definition}
\newtheorem{rem}[thm]{Remark}
\newcommand{\norm}[1]{\left\Vert#1\right\Vert}
\newcommand{\abs}[1]{\left\vert#1\right\vert}
\newcommand{\set}[1]{\left\{#1\right\}}
\newcommand{\Real}{\mathbb R}
\begin{document}

\title{Generalized neck analysis of harmonic maps from surfaces}

\author{Hao Yin}

\address{Hao Yin,  School of Mathematical Sciences,
University of Science and Technology of China, Hefei, China}
\email{haoyin@ustc.edu.cn }

\maketitle

\begin{abstract}
	In this paper, we study the behavior of a sequence of harmonic maps from surfaces with uniformly bounded energy on the generalized neck domain. The generalized neck domain is a union of ghost bubbles and annular neck domains, which connects non-trivial bubbles. An upper bound of the energy density is proved and we use it to study the limit of the nullity and index of the sequence.
\end{abstract}

\section{Introduction}

Let $(N,h)$ be a closed Riemannian manifold isometrically embedded in $\Real^p$ and $B$ be the unit ball of $\Real^2$. We study a sequence of harmonic maps $u_i$ from $B$ to $N$ satisfying

(U1) the energy $\int_B \abs{\nabla u_i}^2 dx$ is uniformly bounded;

(U2) for any $r>0$, $u_i$ converges smoothly to $u_\infty$ on $B\setminus B_r$ (where $B_r$ is the ball of radius $r$ centered at the origin) and 
\begin{equation}\label{eqn:concentration}
	\lim_{r\to 0} \lim_{i\to\infty} \int_{B_r} \abs{\nabla u_i}^2 dx >0.
\end{equation}
The energy concentration (as in \eqref{eqn:concentration}) leads to the existence of a sequence of pairs $(x_i,\lambda_i)$ with $x_i\to 0$ in $B$ and $\lambda_i\to 0$ such that
\[
	v_i(y)=u_i(x_i+\lambda_i y)
\]
converges to a (nontrivial) harmonic map $\omega$ from $\Real^2$ to $N$, which is known as a bubble. It is well known that there may be more than one bubbles developing at one concentration point. 

These bubbles, according to their positions $x_i$ and scales $\lambda_i$, are organized in the form of a tree. There are several expositions about the construction of the bubble tree in the literature (see \cite{ding1995energy, Parker1996}). For technical reasons, a special type of bubbles, known as ghost bubbles (in the sense that they carry no energy in the limit), is introduced as connectors in the bubble tree. We refer to Section \ref{sec:gneck} for the exact formulation. 

An edge in the tree represents an annular domain which is known as the neck. The ratio between the outer radius and the inner radius of the neck goes to infinity. While we know the limit of the sequence of scaled maps, the study of $u_i$ in the neck domain is less obvious and known as the neck analysis. The energy identity theorem and the no neck theorem imply that the energy and the oscillation of $u_i$ vanish in the neck domain. Indeed, a decay of the gradient of $u_i$ (regarded as a map on cylinder due to the conformal invariance of the problem) was proved. Recently, the author \cite{yin2019higher} proved some higher order estimate for $u_i$ in the neck domain which allows us to obtain a normal form in the center piece of the neck. 

It is well possible that the neck above is connected to a ghost bubble. It is worth emphasizing that a ghost bubble is not a real one and it serves the same purpose of connecting real bubbles (or the weak limit map $u_\infty$) as the necks. It is natural to pursue a deeper understanding of $u_i$ on the ghost bubble than the mere vanishing of energy (by its definition). This is the main topic of this paper. As a first step, we obtain an upper bound of $\abs{\nabla u_i}$. 

For a precise formulation of our main result, we need to introduce {\it the generalized neck domain}, on which our upper bound of $\abs{\nabla u_i}$ holds. It is helpful to keep the following simple case in mind, which we illustrate in the following figure. It involves a ghost bubble on top of which two real ones sit.

\begin{figure}[h]
	\centering
	\includegraphics{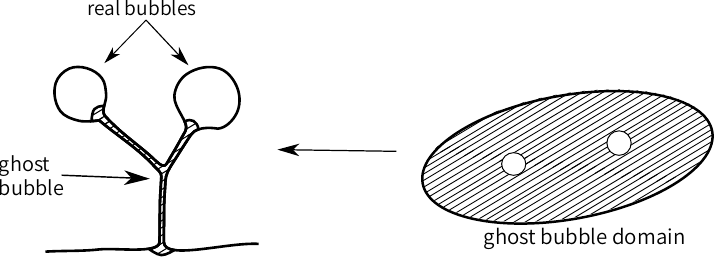}
	\caption{A simple bubble tree with ghost bubble}
	\label{fig:intro}
\end{figure}

The picture on the right shows a disk with two smaller ones removed. It is the simplest example of a so called generalized neck domain. Its image (as shown on the left) consists of a ghost bubble and three necks that are connected to it. 

In general, let's fix the sequence $u_i$. For some parameter $\delta>0$ and $l$ (real) bubbles given by $(y_i^{(j)},\lambda_i^{(j)})$ with $j=1,\cdots,l$, the generalized neck domain is (up to scaling and translation)
	\begin{equation*}
		\Omega_i=B(0, \delta) \setminus \bigcup_{j=1,\cdots,l} B(y_i^{(j)}, \delta^{-1} \lambda_i^{(j)}).
	\end{equation*}
Moreover, we assume that

(O1) the barycenter of the bubbles is the origin in the sense that
\[
	0 = \frac{1}{l} \sum_{j=1}^l y_i^{(j)};
\]

(O2) the bubbles $(y_i^{(j)},\lambda_i^{(j)})$ for $j=1,\cdots,l$ are disjoint in the sense that
\[
	B(y_i^{(j_1)},R \lambda_i^{(j_1)}) \cap B(y_i^{(j_2)}, R\lambda_i^{(j_2)})=\emptyset
\]
for any $R>0$, any $j_1\ne j_2$ and sufficiently large $i$ (depending on $R$, $j_1$ and $j_2$);

(O3) there is some $\varepsilon_0$ depending only on $N$ such that for sufficiently large $i$,
\[
	\int_{\Omega_i} \abs{\nabla u_i}^2 dx <\varepsilon_0.
\]

\begin{rem}
	(1) These assumptions arise naturally in the construction of bubble tree. This is going to be clear in Section \ref{sec:gneck}.

	(2) The assumption (O3) above is a consequence of the energy identity theorem and the definition of the ghost bubble.

	(3) In the construction in Section \ref{sec:gneck}, the generalized neck domain is going to be a translation and a scaling of the $\Omega_i$ defined above. By the nature of the problem, this does not matter.
\end{rem}

The main result of this paper is

\begin{thm}
	\label{thm:main}
	Suppose that $u_i$ is a sequence of harmonic maps satisfying (U1) and (U2) and that $\Omega_i$ is a generalized neck domain defined above, then there is some constant $C$ such that
	\[
		\abs{\nabla u_i}_{\tilde{g}_i} \leq C \qquad \text{on} \quad \Omega_i
	\]
	where $\tilde{g}_i$ is a conformal metric defined in terms of complex coordinate $z$ by
	\begin{equation}\label{eqn:tildegi}
		\tilde{g}_i := \left( 1+ \sum_{j=1}^l \frac{ (\lambda_i^{(j)})^2}{ \abs{z- x_i^{(j)}}^4} \right) dz\wedge d\bar{z} \qquad \text{on} \quad \Omega_i.
	\end{equation}
\end{thm}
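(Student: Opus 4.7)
My plan is a blow-up / contradiction argument: assume a sequence of points along which $|\nabla u_i|_{\tilde g_i}$ blows up, rescale at the natural $\tilde g_i$-unit scale, and derive a contradiction by case analysis on where the blow-up point sits in $\Omega_i$.

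Suppose the conclusion fails. Passing to a subsequence, choose $p_i \in \Omega_i$ with $M_i := |\nabla u_i|_{\tilde g_i}(p_i) \to \infty$ and approximately realizing the $\tilde g_i$-supremum of $|\nabla u_i|_{\tilde g_i}$. Let $\sigma_i = 1/(M_i\,\phi_i(p_i))$ and $v_i(y) = u_i(p_i + \sigma_i y)$. Then the Euclidean gradient satisfies $|\nabla v_i|(0) = 1$, and the near-supremum property gives
\[
|\nabla v_i|(y) \leq \frac{\phi_i(p_i + \sigma_i y)}{\phi_i(p_i)}
\]
whenever $p_i + \sigma_i y \in \Omega_i$. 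The proof reduces to analysing three exhaustive regimes.

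Case (A): $|p_i - x_i^{(j)}| \geq \eta > 0$ for every $j$. Then $\phi_i(p_i) = 1+o(1)$, $\sigma_i \to 0$, and the dominance ratio tends to $1$ uniformly on compact sets; (O3) bounds the energy of $v_i$ on any fixed ball by $\varepsilon_0$; $\varepsilon$-regularity extracts a smooth harmonic limit $v_\infty : \Real^2 \to N$ with $|\nabla v_\infty|(0) = 1$ and total energy $<\varepsilon_0$, contradicting the Sacks-Uhlenbeck energy gap. Case (B): $p_i \to x_i^{(j)}$ with $\rho_i := |p_i - x_i^{(j)}|$ and $\rho_i/\lambda_i^{(j)}$ bounded. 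Comparison with the smooth limit bubble $\omega = \lim u_i(x_i^{(j)} + \lambda_i^{(j)}\,\cdot\,)$ gives $|\nabla u_i|(p_i) = O(1/\lambda_i^{(j)})$, while $\phi_i(p_i) \geq c/\lambda_i^{(j)}$ by (O2), so $M_i$ is bounded. Case (C): $\rho_i/\lambda_i^{(j)} \to \infty$ and $\rho_i \to 0$, so $p_i$ lies in the annular neck around the $j$-th bubble. In cylindrical coordinates $s = \log|z - x_i^{(j)}|$ the target bound is the pointwise decay $|\nabla_{\mathrm{cyl}} u_i|(s,\theta) \leq C e^{-(s - s_j)}$ with $s_j = \log(\lambda_i^{(j)}/\delta)$; transforming back this reads $|\nabla u_i|(z) \leq C\lambda_i^{(j)}/\rho^2 \leq C \phi_i(z)$. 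The exponential rate $1$ comes from the first nonzero Fourier mode on $S^1$, and is obtained by the standard three-annulus / no-neck argument combined with the higher-order estimates of \cite{yin2019higher} to upgrade energy decay to pointwise $C^1$ decay.

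The main obstacle is Case (C). The $j$-th annulus is not a clean one-bubble neck: its outer end opens into the ghost-bubble region, which itself carries other bubble annuli at comparable scales. To justify the neck decay one must verify that on the outer boundary of the $j$-th annulus $u_i$ is close in $C^0$ to a single constant (the limiting value at the ghost bubble). Assumptions (O1)-(O3) buy exactly this: (O3) keeps the total energy on $\Omega_i$ small enough for the no-neck oscillation bound to apply on each annulus; (O2) separates the bubbles so each annulus is a genuine neck and the bubble terms in $\phi_i$ do not interact at their own scale; and (O1) fixes the barycenter so that the configuration cannot drift to infinity under any further rescaling. With the three cases each yielding a contradiction, $M_i$ cannot tend to infinity, and the uniform bound $|\nabla u_i|_{\tilde g_i} \leq C$ follows.
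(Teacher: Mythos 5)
The approach here is genuinely different from the paper's and, as written, contains a real gap that coincides with the paper's central contribution.

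The paper does not perform a pointwise blow-up. Instead it reformulates the bound (Lemma~\ref{lem:gmgc}, Theorem~\ref{thm:main1p}) as a global exponential-decay estimate $\abs{\nabla u_i}_{\bar g_i}\le Ce^{-d(z)}$ with respect to a cylinder-like metric $\bar g_i$, and then proves that decay by combining three ingredients: the classical three-circle lemma, a \emph{generalized} three-circle estimate (Lemmas~\ref{lem:upperbound}, \ref{lem:4d}) that compares the energy on an entire ghost-bubble domain to the energy on its adjacent cylinder ends, and a sharp ODE comparison (Lemma~\ref{lem:mild}, Proposition~\ref{prop:sharp}) that upgrades a rough decay rate $\alpha<1$ to the optimal rate $1$. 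The generalized three-circle estimate is precisely what lets the argument ``pass through'' a multi-connected ghost-bubble region, and it has no counterpart in the standard annular-neck toolbox.

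Your cases (A) and (B) are fine and are essentially the easy observations that $\tilde g_i$ already matches the bubble scales and that the energy gap kills blow-up away from the bubbles. The problem is Case (C), which you yourself identify as the main obstacle but then resolve only by assertion. Two specific issues:

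\emph{First}, your trichotomy is not exhaustive in the relevant sense. When $\rho_i=\abs{p_i-x_i^{(j)}}$ is comparable to the scale $\sigma_i$ of a ghost bubble, the rescaled domain does not limit to $\Real^2$ or to a single annular neck but to a multi-connected ghost-bubble domain (a disk with several smaller disks removed). On such a domain the gap theorem and the removable-singularity argument you invoke in Case (A) do not apply, and ``the standard three-annulus / no-neck argument'' is not available: those tools work on annuli, i.e.\ on domains with exactly two boundary circles. Handling the domain with $\ge 3$ boundary circles is exactly what Lemma~\ref{lem:upperbound} and Lemma~\ref{lem:4d} do, and nothing in your proposal substitutes for them.

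\emph{Second}, even on a genuine annulus, the rate you need is $e^{-1\cdot t}$, not $e^{-\alpha t}$ for some unspecified $\alpha<1$, and the three-circle lemma alone only gives the latter. The paper obtains the sharp rate from the ODE comparison in Proposition~\ref{prop:sharp}, whose hypothesis \eqref{eqn:difficult} (control of $\abs{\partial_\theta^2 u}$ at both ends, used to run a two-step bootstrap of the differential inequality for $\gamma=f^{1/2}$) is not something one gets from the higher-order estimates of \cite{yin2019higher} for free, since those were established for simple necks with real bubbles at both ends, not for a neck opening into a ghost-bubble region. Your phrase ``upgrade energy decay to pointwise $C^1$ decay'' glosses over this. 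In short, the proposal is structured as a blow-up argument whose hard case is delegated to ``standard'' results that cover annuli but not the generalized neck domain; those missing results are the content of Sections~\ref{sub:generalized}--\ref{sub:optimal} of the paper.
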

\begin{rem}
	The constant $C$ in the above theorem depends not only on $N$, but also on the particular sequence. This shall be clear in the proofs. We only remark that even if $C$ is universal, this upper bound depends on the sequence in the sense that the geometry of $\Omega_i$ (hence $\tilde{g}_i$) depends on the relative position and size of the bubbles. It is also in this sense that $C$ depends on the sequence and it is inevitable, if one considers a family of such sequences which brings the tree structure to a sudden change.
\end{rem}

The upper bound of $\nabla u$ is measured with respect to the metric $\tilde{g}_i$ (see \eqref{eqn:tildegi}) defined on the multi-connected domain $\Omega_i$. This metric is related to a specific sequence of $u_i$ presented in Section \ref{sub:example}. On one hand, we feel obliged to give explicit examples to assure the readers that complicated patterns of ghost bubble domain do occur. On the other hand, the maps $u_i$ in this example are holomorphic curves (hence minimal surfaces). As if by coincidence, $\tilde{g}_i$ is the pullback metric of this family of $u_i$. Moreover, this feature of being induced metric of minimal surfaces will be useful in an application, which will be explained in a minute.

There is another way of understanding the upper bound. For that purpose, we need a different conformal metric $\bar{g}_i$. With this metric, the necks are long cylinders of radius $1$ and the ghost bubbles are multi-way connectors of uniformly bounded geometry. For any point $x\in \Omega_i$, let $d(x)$ be the distance from $x$ to the boundary to $\Omega_i$ w.r.t $\bar{g}_i$. Then the upper bound is equivalently formulated by
\[
	\abs{\nabla u_i}_{\bar{g}_i} \leq C e^{- d(x)}.
\]
Indeed, the proof of Theorem \ref{thm:main} relies on this equivalent formulation (see Theorem \ref{thm:main1p}). For the proof, we combine known techniques with some new estimate. The known ones include the three circle lemma, the ordinary differential inequality and the sharp decay estimate (see Rade \cite{rade1993decay}). The new estimate generalizes the three circle lemma (that works for annular domain) to the case of multi-connected domain (i.e. a disk with more than $2$ smaller ones removed).

As an application, we improve a result in \cite{yin2019higher}. For a harmonic map $u$ from a closed Riemannian surface $\Sigma$ to $N$, let $J_u$ be the linearization of the tension field operator $\tau(u)$, $Nul(u)$ be the dimension of its kernel and $NI(u)$ be the number of nonpositive eigenvalues of $J_u$ (counting multiplicity). A semi-continuity property of $NI$ was proved by studying the limit of eigenfunctions of $J_{u_i}$. 
\begin{thm}[Theorem 1.6 of \cite{yin2019higher}]
	\label{thm:old} Let $u_i$ be a sequence of harmonic maps from a closed Riemannian surface $\Sigma$ to a closed Riemannian manifold $N$. Let $u_\infty$ be the weak limit and $\omega_1,\cdots,\omega_l$ be all the bubbles (ghost bubbles included) in the bubble tree. Then
	\begin{equation}\label{eqn:old1}
		\limsup_{i\to \infty} NI(u_i) \leq NI(u_\infty) + \sum_{k=1}^l NI(\omega_j)
	\end{equation}
	and
	\begin{equation}\label{eqn:old2}
		\limsup_{i\to \infty} Nul(u_i) \leq Nul(u_\infty) + \sum_{k=1}^l Nul(\omega_j).
	\end{equation}
	Here in the definition of $Nul$ and $NI$ of $\omega_k$, we regard $\omega_k$ as a harmonic map from $S^2$ to $N$.
\end{thm}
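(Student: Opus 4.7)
My plan is to argue by contradiction using the spectral characterizations of $NI$ and $Nul$. Assume \eqref{eqn:old1} fails; then, along a subsequence (not renamed), there exist $L^2(\Sigma)$-orthonormal eigensections $\phi_i^{(1)},\ldots,\phi_i^{(m)}$ of $J_{u_i}$ with nonpositive eigenvalues $\mu_i^{(k)}$, where $m = NI(u_\infty) + \sum_j NI(\omega_j) + 1$. The goal is to extract a block-diagonal limit of these sections on the base surface and on each (possibly ghost) bubble, recognize each block as a nonpositive eigensection of the corresponding Jacobi operator, and contradict the dimension count.

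First I would run the bubble-tree convergence argument for $\phi_i^{(k)}$. The $\mu_i^{(k)}$ are bounded below (nonpositive eigenvalues of $J_{u_i}$ are controlled by the finitely many bubbles' energies via the Rayleigh quotient), so the equation $J_{u_i}\phi_i^{(k)} = \mu_i^{(k)} \phi_i^{(k)}$ together with $C^\infty_{\mathrm{loc}}$ convergence of $u_i$ to $u_\infty$ off the concentration set gives uniform elliptic estimates on compact subsets. A subsequence of $\phi_i^{(k)}$ then converges smoothly off the concentration set to a section $\phi_\infty^{(k)}$ of $u_\infty^*TN$ satisfying $J_{u_\infty}\phi_\infty^{(k)} = \mu_\infty^{(k)} \phi_\infty^{(k)}$ with $\mu_\infty^{(k)} \leq 0$; removability of singularities extends it to $\Sigma$. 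Rescaling $\tilde\phi_{i,j}^{(k)}(y) = \phi_i^{(k)}(x_i^{(j)} + \lambda_i^{(j)} y)$ around each bubble $(x_i^{(j)},\lambda_i^{(j)})$ and invoking the smooth convergence of the rescaled $v_i^{(j)}$ to $\omega_j$ yields analogous smooth limits $\tilde\phi_j^{(k)}$ on $\Real^2$ that extend to eigensections of $J_{\omega_j}$ on $S^2$ with nonpositive eigenvalue (for a ghost $\omega_j$, this operator is the Laplacian on a trivial rank-$\dim N$ bundle).

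The heart of the argument is showing that no $L^2$ mass of $\phi_i^{(k)}$ is lost in the neck and ghost bubble domains, i.e.\ that $\lim_i \int_{\Omega_i}\abs{\phi_i^{(k)}}^2 = 0$ on each generalized neck domain $\Omega_i$. This is where Theorem \ref{thm:main} plays the decisive role. Since $J_{u_i}\phi = \Delta\phi - R^N(\phi,du_i)du_i$, one needs $\abs{du_i}$ to be quantitatively small in the center of $\Omega_i$ relative to the cylindrical-type $\bar g_i$ coordinates. On genuine annular necks this is provided by Rade's sharp decay; on the multi-way ghost bubble pieces it is precisely the pointwise bound $\abs{\nabla u_i}_{\tilde g_i}\leq C$ of Theorem \ref{thm:main} translated into $\bar g_i$. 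With the potential term controlled, a three-circle lemma (generalized to multi-connected domains as developed in Section \ref{sec:gneck}) applied to $\int_{S^1}\abs{\phi_i^{(k)}}^2 d\theta$ yields an ODE inequality forcing exponential decay of the $L^2$ mass from $\partial\Omega_i$ inward, and the vanishing follows.

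With mass conservation in hand, the Gram matrix of the concatenated limit vectors $\Phi^{(k)} = (\phi_\infty^{(k)}, \tilde\phi_1^{(k)},\ldots,\tilde\phi_l^{(k)})$ in $L^2(\Sigma, u_\infty^*TN)\oplus \bigoplus_j L^2(S^2,\omega_j^*TN)$ tends to the identity, so the $m$ vectors $\Phi^{(k)}$ are linearly independent; but each block lies in a nonpositive eigenspace of the corresponding Jacobi operator, so the total dimension is at most $NI(u_\infty) + \sum_j NI(\omega_j) = m-1$, the desired contradiction. Inequality \eqref{eqn:old2} for nullity follows by restricting throughout to eigenvalue exactly $0$. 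I expect the main obstacle to be the third paragraph: without the pointwise bound $\abs{\nabla u_i}_{\tilde g_i}\leq C$ on the generalized neck domain, the three-circle step fails on ghost bubble pieces where standard Rade decay is insufficient, which is precisely why Theorem \ref{thm:main} of the present paper is invoked.
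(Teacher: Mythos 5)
The statement you were asked to prove, Theorem~\ref{thm:old}, is not proved in this paper at all: it is quoted verbatim from the author's earlier work \cite{yin2019higher} and used as a baseline against which the new Theorem~\ref{thm:better} is an improvement. Your proposal, by invoking Theorem~\ref{thm:main} of the present paper (which did not exist when Theorem~\ref{thm:old} was proved), is really an attempt to prove Theorem~\ref{thm:better}. This is worth noticing because it also creates an internal inconsistency in your argument: you rescale around ghost bubbles to extract limit eigensections $\tilde\phi_j^{(k)}$ there, while simultaneously claiming that the $L^2$ mass of $\phi_i^{(k)}$ vanishes on the whole generalized neck domain, which by definition contains the ghost bubble domains. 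If the second claim holds, the ghost-bubble limits $\tilde\phi_j^{(k)}$ are forced to be zero and contribute nothing to the Gram matrix, so your dimension count actually uses only real bubbles and proves the sharper \eqref{eqn:old1}--\eqref{eqn:old2} of Theorem~\ref{thm:better}, not the version you state.

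More seriously, the step you yourself flag as the heart of the argument is incorrect as stated. You propose to apply a three-circle argument to $t\mapsto\int_{S^1}\abs{\phi_i^{(k)}}^2\,d\theta$ and deduce exponential decay of the eigensection inward from $\partial\Omega_i$. Eigensections of $J_{u_i}$ do not decay on long cylinders. Writing the equation as $\Delta\phi = \mu_i^{(k)}\phi + R^N(\phi,du_i)du_i$ and decomposing in Fourier modes on $S^1$, the zero-frequency mode satisfies $\phi_0''\approx \mu_i^{(k)}\phi_0$ with $\mu_i^{(k)}\le 0$, which is linear (if $\mu=0$) or oscillatory (if $\mu<0$), never decaying; the second derivative of $\int_{S^1}\abs{\phi}^2$ picks up a term $2\mu\int_{S^1}\abs{\phi}^2$ that destroys the log-convexity used in three-circle estimates. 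What actually happens in the paper is weaker and different in kind: one establishes a uniform \emph{sup bound} (T2) $\sup_{\Omega_i(\delta_0/16)}\abs{v_{i,k}}\le C$, proved via the mean value inequality for subsolutions on minimal surfaces (Lemma~\ref{lem:53}, exploiting that $\tilde g_i$ is the pullback metric of the explicit holomorphic immersion \eqref{eqn:defui}), and then combines it with (T1), the fact that $\mathrm{vol}_{g_i}(\Omega_i(\delta))\le C\delta^2$ (Lemma~\ref{lem:52}). The $L^2$ mass then vanishes because the \emph{area} of the generalized neck with respect to $g_i$ tends to zero, not because $\phi$ decays. Theorem~\ref{thm:main} enters only to control $\abs{\nabla u_i}_{\tilde g_i}$, which feeds into the differential inequality $\triangle_{g_i} w\ge -C_1 w$ for $w=\abs{v_{i,k}}$ to which the mean value inequality applies. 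Your three-circle plan for $\phi$ would need to be replaced by this boundedness-plus-volume mechanism.

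Beyond those two issues, the scaffolding of your argument (contradiction with $m$ eigensections, local smooth convergence off the concentration set, rescaling at each bubble, Gram matrix / rank argument) matches the outline in Section~\ref{sec:app} and in \cite{yin2019higher}.
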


The proof of Theorem \ref{thm:old} was based on an upper bound of $\abs{\nabla u_i}$ on the (annular) neck domain. If $\omega_k$ is a ghost bubble (i.e. constant map), then
\[
	Nul(\omega_k) = NI(\omega_k) = \dim N.
\]
The possible existence of ghost bubbles weakens the result of Theorem \ref{thm:old}. As an application of Theorem \ref{thm:main}, we are able to show

\begin{thm}
	\label{thm:better} Under the same assumptions as in Theorem \ref{thm:old}, \eqref{eqn:old1} and \eqref{eqn:old2} hold for $\omega_1,\cdots,\omega_l$ being the {\it real} bubbles in the bubble tree.
\end{thm}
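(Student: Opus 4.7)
The plan is to revisit the proof of Theorem \ref{thm:old} in \cite{yin2019higher} and absorb each ghost bubble into a generalized neck domain to which Theorem \ref{thm:main} applies. Recall that the original argument fixes a sequence of eigenfunctions $\phi_i$ of $J_{u_i}$ with nonpositive eigenvalues $\mu_i$ (resp.\ with $\mu_i=0$ for the nullity statement), normalized in $L^2$. Away from the concentration points, $\phi_i$ subconverges to an eigenfunction of $J_{u_\infty}$; a suitable rescaling at each bubble produces an eigenfunction of $J_{\omega_k}$; and the $L^2$ mass lying on the annular necks is shown to vanish in the limit using the pointwise bound on $\abs{\nabla u_i}$ there. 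Summing the surviving contributions gives \eqref{eqn:old1} and \eqref{eqn:old2}, but every ghost bubble $\omega_k$ with constant value contributes a spurious $Nul(\omega_k)=NI(\omega_k)=\dim N$. The goal is to eliminate exactly these spurious contributions.

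The first step is combinatorial. Consider a maximal connected subtree of the bubble tree whose nodes are all ghost bubbles, together with the annular necks joining them and the necks going out to its real-bubble children and to its parent (either a real bubble or $u_\infty$). After translating the root of this subtree to the origin and rescaling, the union of all of these regions becomes, up to a further translation realising (O1), a domain of the form $\Omega_i=B(0,\delta)\setminus\bigcup_j B(y_i^{(j)},\delta^{-1}\lambda_i^{(j)})$ described before Theorem \ref{thm:main}. The disjointness assumption (O2) follows from the bubble tree construction, and (O3) follows from the energy identity together with the defining property that each ghost bubble carries no energy. Hence Theorem \ref{thm:main} applies and yields $\abs{\nabla u_i}_{\tilde g_i}\leq C$ on $\Omega_i$, equivalently the exponential decay $\abs{\nabla u_i}_{\bar g_i}(x)\leq C e^{-d(x)}$ in the cylinder-type metric $\bar g_i$ of Theorem \ref{thm:main1p}.

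With this improved neck bound, repeat the eigenfunction analysis of \cite{yin2019higher}, but treat each generalized neck domain $(\Omega_i,\bar g_i)$ as a single enlarged neck rather than decomposing it into isolated annular pieces at each ghost bubble. Writing the Jacobi equation in the form $-\Delta_{\bar g_i}\phi_i+A_i\phi_i=\mu_i\phi_i$ with $\abs{A_i}\leq C\abs{\nabla u_i}_{\bar g_i}^2\leq C e^{-2d(x)}$, the same $L^2$ integration-by-parts argument as for the annular neck forces $\int_{\Omega_i}\abs{\phi_i}^2\to 0$. Consequently the ghost bubble interior contributes nothing in the limit; neither the $\dim N$ constant kernel of $-\Delta$ that appears when one naively rescales at a ghost bubble, nor any additional neck piece, survives. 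The only surviving contributions are $Nul(u_\infty)$, $NI(u_\infty)$, and the corresponding quantities of the real bubbles, which is Theorem \ref{thm:better}.

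The main obstacle is the $L^2$ decay of $\phi_i$ on $(\Omega_i,\bar g_i)$: the argument in \cite{yin2019higher} is a three-circle/ODE inequality in the single radial parameter of an annulus, while $\Omega_i$ is multi-connected. The natural remedy is to run a discrete three-circle inequality along each cylindrical arm of $(\Omega_i,\bar g_i)$ and couple the arms through the finitely many junction regions, which have uniformly bounded $\bar g_i$-geometry and act as weakly contractive connectors. The boundary data at the ends of the arms are controlled by the convergence of $\phi_i$ on the real bubbles and on $u_\infty$, while the junctions play exactly the same role as in the multi-connected three-circle lemma already developed in the proof of Theorem \ref{thm:main}. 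Once this multi-arm decay is in place, the remainder of the argument is formally identical to the annular case in \cite{yin2019higher}.
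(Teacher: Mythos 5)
Your proposal correctly identifies the combinatorial step (the generalized neck domain is obtained by merging each maximal chain of ghost bubbles and their connecting necks into one region, on which Theorem \ref{thm:main} applies), and correctly observes that the resulting gradient bound is what replaces the simple-neck estimate used in \cite{yin2019higher}. But from there the proposed completion diverges from the paper and contains a real gap.

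The paper's proof does not establish $L^2$ decay of the eigenfunctions along the generalized neck. What it actually shows are the two facts (T1) and (T2) stated in Section \ref{sec:app}: the $g_i$-volume of $\Omega_i(\delta)$ tends to zero as $\delta\to 0$ (uniformly in $i$), and $\sup_{\Omega_i(\delta_0/16)}\abs{v_{i,k}}\leq C$ uniformly in $i$. Together these control the cross terms $\int_{\Omega_i(\delta)}\langle v_{i,k},v_{i,k'}\rangle\,dV_{g_i}$ in the double limit $\lim_{\delta\to 0}\lim_{i\to\infty}$. For (T1) one chooses $g_i$ to equal $\tilde g_i$ on the generalized neck, and the $\tilde g_i$-volume there is $O(\delta^2)$ by Lemma \ref{lem:52}. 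The crucial new ingredient for (T2) — and the one your proposal is missing entirely — is Lemma \ref{lem:53}: because $\tilde g_i$ is the pullback metric of the holomorphic (hence minimal) embedding $\tilde u_i$ of \eqref{eqn:defui} into $\mathbb C^{l+1}$, the classical mean value inequality on minimal surfaces (Colding--Minicozzi) gives a uniform-in-$i$ sup bound for almost-subsolutions of $\triangle_{g_i}w\geq -C_1 w$ on the multi-connected domain, because all the degenerating geometry of $\Omega_i$ is visible as a nondegenerate piece of a minimal surface in Euclidean space. This is exactly the purpose of the example in Section \ref{sub:example} and the reason the main theorem is stated with $\tilde g_i$ rather than only with $\bar g_i$.

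Your proposed alternative — a multi-arm three-circle/ODE argument for the Jacobi field $\phi_i$ in the metric $\bar g_i$ — would not close the gap as written. The three-circle estimate of Section \ref{sub:generalized} and the sharp decay of Section \ref{sub:optimal} both lean essentially on the Pohozaev-type balance \eqref{eqn:poho} for the harmonic maps $u_i$ themselves; there is no such conserved quantity for a Jacobi field, so the convexity inequality \eqref{eqn:convex} and its generalization do not transfer. Moreover, what is actually required is a pointwise bound to be multiplied against a small volume, not an $L^2$ decay statement: the eigenfunctions are $L^2$-normalized and may a priori carry nonvanishing mass on the neck at fixed $\delta$, and the paper handles this by sending $\delta\to 0$, not by proving the neck mass vanishes as $i\to\infty$. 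To complete the proof along the paper's lines you need to (i) build $g_i$ from $\tilde g_i$ on the neck as in \eqref{eqn:metricgi}, (ii) invoke the minimal-surface mean value inequality to get the uniform sup bound (T2), and (iii) combine with the $O(\delta^2)$ volume bound (T1).
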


The rest of the paper is organized as follows. In Section \ref{sec:gneck}, we recall the construction of bubble tree and define the generalized neck domain. We also define the metric $\bar{g}_i$ that is going to be used in Section \ref{sec:proof}. At the end of Section \ref{sec:gneck}, we introduce the metric $\tilde{g}_i$ in Theorem \ref{thm:main} as the pullback metric of a specific sequence of $u_i$. In Section \ref{sec:decay}, we prove key lemmas that will be used in Section \ref{sec:proof}, where Theorem \ref{thm:main} is proved. In the final section, we discuss the application and prove Theorem \ref{thm:better}.

\section*{Acknowledgement}
The author thanks Professor Yuxiang Li for numerous discussions on conformal immersion and its relation to harmonic maps. This research is supported by NSFC11971451.

\section{The generalized neck domain} \label{sec:gneck}
In the literature, there are several different ways to construct the bubble tree(see \cite{Parker1996,ding1995energy}).  In this section, we present the argument in two steps. We first obtain the set of (real) bubbles by an abstract maximizing argument. Given the set of real bubbles, we can then decompose the domain $B$ into the union of some bubble domains (one for each real bubble) and some generalized neck domains. By adding ghost bubbles, we show how the generalized neck domain is further decomposed into the union of ghost bubble domains and neck domains (i.e. annulus). Along with the decomposition, we define the metric $\bar{g}_i$.

\begin{rem}
	The material presented in this section is a re-formulation of very well known facts. Hence, the verification of elementary properties is left to the readers.
\end{rem}

\subsection{The tree of real bubbles}

\begin{defn}\label{defn:bubble}
	A (real) bubble $\mathcal B$ is a sequence of pairs $(x_i,\lambda_i)$ such that the rescaled maps $v_i(x)= u_i(x_i+\lambda_i x)$ converge weakly in $W^{1,2}$ to some {\bf nontrivial} harmonic map $\omega$ from $\Real^2$ to $N$.
\end{defn}

Intuitively, the word 'bubble' may refer to the image of the limit map $\omega$. However, Definition \ref{defn:bubble} is good for technical reasons. It dictates a region (roughly, $B(x_i,R\lambda_i)$ for large $R$) in the domain such that the maps restricted to this region converge to $\omega$. Very often, it is this region that matters.

Notice that different sequences may give the same region and lead to the same limit map  $\omega$ (up to reparametrization). Hence, any two sequences of pairs, $(x_i,\lambda_i)$ and $(y_i,\sigma_i)$ are said to be equivalent if and only if there is $c>0$ such that for all $i$,
\begin{equation*}
	c\leq \frac{\lambda_i}{\sigma_i}\leq \frac{1}{c} \qquad \text{and} \qquad \abs{x_i-y_i}\leq \frac{1}{c} \lambda_i.
\end{equation*}

\begin{rem}\label{rem:equivalent}
	Rigorously speaking, a bubble should be defined as the equivalence class of the sequence of pairs $(x_i,\lambda_i)$. However, for simplicity, we simply agree that equivalent sequences define the same bubble.
\end{rem}

It follows from the gap theorem and the total energy bound that (by passing to a subsequence) there exists a unique set of bubbles $\mathcal T$ which is maximal in the sense that one can not add another (not equivalent) real bubble. This is a consequence of finite induction.

For convenience, we add a trivial sequence $(0,1)$ ($x_i=0$ and $\lambda_i=1$) to $\mathcal T$. This 'bubble' represents the weak limit of $u_i$ and it is going to be the root of the bubble tree.

According to the size and position of the bubbles, we define a partial order which yields the tree structure in the set of real bubbles $\mathcal T$.
\begin{defn}
	(1)A bubble $\mathcal B_1=(x_i,\lambda_i)$ is said to be {\bf on top of} another bubble $\mathcal B_2=(y_i,\sigma_i)$ if and only if there is some constant $c>0$ such that
	\begin{equation*}
		\lim_{i\to \infty} \frac{\lambda_i}{\sigma_i}=0 \qquad \text{and} \qquad \abs{x_i-y_i}\leq c \sigma_i.
	\end{equation*}

	(2) $\mathcal B_1$ is said to be {\bf directly on top of} $\mathcal B_2$ if (i) $\mathcal B_1$ is on top of $\mathcal B_2$ and (ii) there is no other $\mathcal B\in \mathcal T$ satisfying $\mathcal B_1$ is on top of $\mathcal B$ and $\mathcal B$ is on top of $B_2$.
\end{defn}

By taking $\mathcal T$ as the set of vertices and taking the set of pairs $(\mathcal B_1,\mathcal B_2)$ satisfying $\mathcal B_1$ is directly on top of $\mathcal B_2$ as the set of edges, we define a graph, which is obviously a tree, and which we also denote by $\mathcal T$ for simplicity. This is the tree of real bubbles.

\subsection{The generalized neck domain}

Given the bubble tree $\mathcal T$ above, we decompose the domain $B$ into the union of bubble domains and generalized neck domains. The decomposition depends on a parameter $\delta$, which is a small positive number depending on $u_i$ and will be chosen in the constructure below.

For each bubble $\mathcal B=(x_i,\lambda_i)\in \mathcal T$, suppose that there are $l$ bubbles $\mathcal B_1,\cdots,\mathcal B_{l}$ that are directly on top of $\mathcal B$. For $j=1,\cdots,l$, let $\mathcal B_j$ be represented by the sequence $(y_i^{(j)}, \lambda_i^{(j)})$. The concentration set is defined by
\begin{equation*}
	\mathcal C:=\left\{ \lim_{i\to \infty} \frac{y_i^{(j)}-x_i}{ \lambda_i} | \quad j=1,\cdots,l \right\}.
\end{equation*}
Notice that the number of elements in the concentration set may be strictly smaller than $l$. Keeping Remark \ref{rem:equivalent} in mind, we may assume that for all $x\in \mathcal C$, we have $\abs{x}\leq 1$ by choosing a larger $\lambda_i$.

For a fixed concentration point $x\in \mathcal C$, assume that there are $l_x$ bubbles (among $\mathcal B_1$,\ldots,$\mathcal B_l$), say $\mathcal B_1,\cdots, \mathcal B_{l_x}$, satisfying
\begin{equation*}
	\lim_{i\to \infty} \frac{y_i^{(j)}-x_i}{\lambda_i}=x,\qquad j=1,\cdots,l_x.
\end{equation*}
These $\mathcal B_{1},\cdots,\mathcal B_{l_x}$ are said to {\bf be concentrated at} $x$.

At each concentration point $x\in \mathcal C$, we define a center of mass position
\begin{equation*}
	c_i= \frac{1}{l_x}\sum_{j=1}^{l_x} y_i^{(j)}.
\end{equation*}
Obviously, $\lim_{i\to\infty} \frac{c_i-x_i}{\lambda_i}=x$. We write $c_i(x)$ if we need to emphasize its dependence on $x\in \mathcal C$.

\begin{defn}
The bubble domain of $\mathcal B$ is
\begin{equation*}
	\Omega_{\mathcal B}= B(x_i, \delta^{-1} \lambda_i) \setminus \bigcup_{x\in \mathcal C} B(c_i(x), \delta \lambda_i).
\end{equation*}
\end{defn}
Here we assume that $\delta$ is small so that the balls $B(c_i(x),\delta \lambda_i)$ for $x\in \mathcal C$ are disjoint. In case that $\mathcal C$ is empty set, namely, there is no bubble on top of $\mathcal B$, the bubble domain is just $B(x_i,\delta^{-1}\lambda_i)$.

In case that $\mathcal C$ is not empty, we define 
\begin{defn}\label{defn:gnd}
	The generalized neck domain at $x$ is 
	\begin{equation*}
		\Omega_{\mathcal B,x}=B(c_i, \delta\lambda_i) \setminus \bigcup_{j=1,\cdots,l_x} B(y_i^{(j)}, \delta^{-1} \lambda_i^{(j)}).
	\end{equation*}
\end{defn}
Here we omit the routine verification that the balls $B(y_i^{(j)}, \delta^{-1}\lambda_i^{(j)})$ are disjoint and contained in $B(c_i,\delta\lambda_i)$ when $i$ is sufficiently large. 

The topology of generalized neck domain is a disk with finitely many smaller ones removed. Notice that we have no control over the size and position of these removed disks. It is the main task of this paper to study the behavior of $u_i$ in this domain.


\subsection{Decomposition of generalized neck domain by adding ghost bubbles}\label{sub:gnd}
Let $x$ be a concentration point of the bubble $\mathcal B$ as in the previous subsection. When $l_x=1$, we have $c_i=y_i^{(1)}$ and the generalized neck domain $\Omega_{\mathcal B,x}$ takes the particular simple form
\begin{equation*}
	B(y_i^{(1)}, \delta \lambda_i)\setminus B(y_i^{(1)}, \delta^{-1} \lambda_i^{(1)}).
\end{equation*}
Such annulus type domain is called {\bf simple neck domain} and known results on the neck analysis apply to this type of neck domain. In this simple case, we define
\begin{equation}
	w(z)=\frac{1}{\abs{z-c_i}^2}\qquad \text{on} \quad B(c_i,\delta\lambda_i)\setminus B(c_i,\delta^{-1}\lambda_i^{(1)}).
	\label{eqn:ww1}
\end{equation}
This function $w$ is going to be used as a conformal factor in the definition of $\bar{g}_i$.

When $l_x>1$, we describe below an induction process, which by adding some more ghost bubbles, further decompose the generalized neck domain into the union of simple neck domains and (small) ghost bubble domains.

By taking subsequence, we may assume that
\begin{equation}\label{eqn:sigmai}
	\sigma_i:=2 \max_{j_1,j_2=1,\cdots,l_x} \abs{y_i^{(j_1)}-y_i^{(j_2)}}=\abs{y_i^{(1)}-y_i^{(2)}} >0.
\end{equation}
The sequence $(c_i,\sigma_i)$ (up to equivalence as in Remark \ref{rem:equivalent}) represents a {\bf ghost bubble} $\tilde{\mathcal B}$. Due to the maximality of $\mathcal T$, one can check that the limit of
\begin{equation*}
	v_i(y)= u_i(c_i + y \sigma_i)
\end{equation*}
is constant map. 
Indeed, we have

(1) $(c_i,\sigma_i)$ is not equivalent to any bubble in $\mathcal T$;

(2) $B(c_i, \delta\lambda_i)\setminus B(c_i,2\sigma_i)$ is a simple neck domain, on which we define
\begin{equation}
	w(z)= \frac{1}{ \abs{z-c_i}^2};
	\label{eqn:ww2}
\end{equation}

(3) ${\mathcal B}_1,\cdots,{\mathcal B}_{l_x}$ are directly on top of $\tilde{\mathcal B}$;

(4) the concentration set 
\[
	\tilde{\mathcal C}=\set{\lim_{i\to \infty} \frac{y_i^{(j)}-c_i}{\sigma_i}|\, j=1,\cdots,l_x}
\]
contains at least two points and is a subset of $B(0,1/2)$ (see \eqref{eqn:sigmai}).

By choosing $\delta$ small, we may assume that the minimal distance between any two points in $\tilde{\mathcal C}$ is larger than $3\delta$. For each $y\in \tilde{\mathcal C}$, define the center of mass (as before)
\begin{equation*}
	c_i(y)= \frac{1}{l_y} \sum_{j=1}^{l_y} y_i^{(\alpha_j)}
\end{equation*}
where $(y_i^{(\alpha_j)}, \lambda_i^{(\alpha_j)}) (j=1,\cdots,l_y)$ are a choice of $l_y$ bubbles among $\mathcal B_1,\cdots,\mathcal B_{l_x}$.

\begin{defn}
	\label{defn:gbd} The ghost bubble domain is defined to be
	\begin{equation*}
		B(c_i, 2\sigma_i) \setminus \bigcup_{y\in \tilde{\mathcal C}} B(c_i(y), \delta \sigma_i).
	\end{equation*}
\end{defn}
On the above ghost bubble domain, we choose $w$ to be any smooth functions satisfying 

(W1) there is some constant $C>0$ such that
\begin{equation*}
	\frac{\sigma_i^2}{C}\leq w(z)\leq C\sigma_i^2;
\end{equation*}

(W2) $w$ is $\frac{1}{\abs{z-c_i}^2}$ in a neighborhood of $\partial B(c_i, 2\sigma_i)$;

(W2) $w$ is $\frac{1}{\abs{z-c_i(y)}^2}$ in a neighborhood of $\partial B(c_i(y), \delta\sigma_i)$.

For each $y\in \tilde{\mathcal C}$, it is a concentration point on the ghost bubble $(c_i,\sigma_i)$. We repeat the construction above. Notice that the total number of real bubbles (directly on top) concentrated at $y$ becomes strictly smaller than $l_x$. Hence the induction stops after finitely many steps.

We conclude this subsection by setting
\begin{equation}
	\bar{g}_i= w(z) dz\wedge d\bar{z}, \qquad \text{on} \quad \Omega_{\mathcal B,x}
	\label{eqn:bargi}
\end{equation}
where $w$ is defined by \eqref{eqn:ww1}, \eqref{eqn:ww2} and (W1-W3).
%
%
\subsection{An example of bubble tree}\label{sub:example}

This short section consists of two parts. The first part is an example which demonstrates that ghost bubbles and very complicated generalized neck domains do occur. The second part shows that the metric in \eqref{eqn:tildegi} is the pullback of some holomorphic maps into $\mathbb C^n$. This not only helps the understanding of Theorem \ref{thm:main}, but also plays a role in the proof of Theorem \ref{thm:better}.

Since this paper deals with ghost bubbles, it is natural to ask whether there exists a sequence of $u_i$ as in Theorem \ref{thm:main} that the construction in the previous subsections leads to a ghost bubble. Further more, is there a generalized neck domain as constructed above such that the punctured disks shrink and approach each other at arbitrary speed?

Indeed, the following example shows that one can prescribe the number of bubbles, the position and the scale of each (real) bubble, so that the argument in Section \ref{sub:gnd} gives a generalized neck domain as complicated as one needs. 

Precisely, let $l$ be the number of (real) bubbles. Let $(x_i^{(j)},\lambda_i^{(j)})$ be the center and the scale of the $j$-th bubble. In terms of the holomorphic coordinate $z$ on $\mathbb C$, we define a sequence of maps $\tilde{u}_i$ from $B$ to $\mathbb C^{l+1}$ to be
\begin{equation}\label{eqn:defui}
	\tilde{u}_i(z)=\left( z, \frac{\lambda_i^{(1)}}{z-x_i^{(1)}},\cdots,\frac{\lambda_i^{(l)}}{z-x_i^{(l)}} \right).	
\end{equation}
Using the homogeneous coordinates $[z_1,z_2]$ of $\mathbb CP^1$, we may regard $\mathbb C$ as an open subset of $\mathbb CP^1$, via the identification,
\begin{equation*}
	z\mapsto [z,1].
\end{equation*}
Similarly, $\mathbb C^{l+1}$ is identified with an open subset of $\mathbb CP^{l+1}$ via
\begin{equation*}
	(z_1,\cdots,z_{l+1})\mapsto [z_1,\cdots,z_{l+1},1].
\end{equation*}
Hence $\tilde{u}_i$ can be extended in a unique way as a map $u_i$ from $\mathbb CP^1$($S^2$) to $\mathbb CP^{l+1}$ and the map $u_i$ is holomorphic (hence harmonic). It is elementary to check that there are exactly $l$ bubbles occurring at the prescribed position and rate.

Next, we assume that

(E1) the $l$ bubbles concentrate at $0$, i.e.
\[
	\lim_{i\to \infty} x_i^{(j)}=0,\qquad j=1,\cdots,l.
\]

(E2) the center of mass is $0$, i.e.
\[
	\sum_{j=1}^l x_i^{(j)}=0.
\]

(E3) the $l$ bubbles are separated from each other, in other words, no one is on top of another. That is, for any $R>0$ and $j_1\ne j_2$,
\[
	B(x_i^{(j_1)},R \lambda_i^{j_1}) \cap B(x_i^{(j_2)}, R\lambda_i^{(j_2)})=\emptyset
\]
for sufficiently large $i$.

The generalized neck domain given in Definition \ref{defn:gnd} is
\[
	\Omega = B(0,\delta) \setminus \bigcup_{j=1,\ldots,l} B(x_i^{(j)},\delta^{-1}\lambda_i^{(j)}).
\]
For the $\tilde{u}_i$ in \eqref{eqn:defui}, we regard it as a holomorphic map from $\Omega$ to $\mathbb C^{l+1}$ and the pullback metric is
\begin{equation}
	\tilde{g}_i:= \left( 1+ \sum_{j=1,\ldots,l} \frac{(\lambda_i^{(j)})^2}{ \abs{z-x_i^{(j)}}^4} \right) dz\wedge d\bar{z}. 
	\label{eqn:metric}
\end{equation}
To conclude this section, we remark that $\tilde{u}_i$ parametrizes an embedded minimal surface in $\mathbb C^{l+1}$. The surface has $l+1$ ends and its tangent cone at the infinity is the union of $l+1$ coordinate planes of $\mathbb C^{l+1}$.

\section{Various energy decay estimates}\label{sec:decay}
In this section, we prove two estimates about the decay/growth of energy density on different domains. The first one is a generalization of the well known three circle lemma. The second one is a sharp growth estimate on long cylinder. 

Before we start, we note the following convention about the notations. We will use $\tilde{c}$ for universal constants, $c$ for constants that depend on the target manifold $N$, and $C$ for constants that depend both on $N$ and the particular sequence of maps $u_i$. In general, these constants may vary from line to line. However, subscripts will be added, if it is necessary to note the distinction between them.

Moreover, throughout this section, there is a small constant $\varepsilon_1$ appearing in the assumptions of the following results. We remark that it depends only on $N$, not on the sequence $u_i$. 

\subsection{Generalized three circle estimate}\label{sub:generalized}

The application of the three circle estimate to the study of harmonic maps has a long tradition(see \cite{simon1983asymptotics, QT1997, LY2016,ai2017neck}). We start by recalling the following well known result.
\begin{lem}
	\label{lem:3c} There is some $\varepsilon_1(N)>0$. For any $\beta>0$, there is $L_0>1$ (depending only on $\beta$) such the following is true for any $L>L_0$. 
	Assume that $u$ is a harmonic map defined on $[0,3L]\times S^1$ satisfying
	\begin{equation}\label{eqn:DT3c}
		\int_{[t,t+1]\times S^1} \abs{\nabla u}^2 \leq \varepsilon_1,\qquad \forall t\in [0,3L-1]
	\end{equation}
	and
\begin{equation}\label{eqn:poho}
	\int_{\set{t}\times S^1} \abs{\partial_\theta u}^2 - \abs{\partial_t u}^2 =0,\qquad \forall t\in [0,3L].
\end{equation}
	Then
	\begin{equation}\label{eqn:convex}
		\int_{[L,2L]\times S^1} \abs{\nabla u}^2 \leq \beta \left( \int_{[0,L]\times S^1} + \int_{[L,3L]\times S^1} \right) \abs{\nabla u}^2.
	\end{equation}
\end{lem}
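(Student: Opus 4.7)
The plan is to establish the three-circle inequality via Fourier decomposition in $\theta$, treating the harmonic map equation as a small perturbation of the scalar Laplace equation on the cylinder. First, $\varepsilon$-regularity for harmonic maps applied to \eqref{eqn:DT3c} gives the pointwise bound $\abs{\nabla u}\leq c(N)\sqrt{\varepsilon_1}$ on $[1, 3L-1]\times S^1$; taking $\varepsilon_1$ small, the image $u([0,3L]\times S^1)$ lies in a single small geodesic ball of $N$. One can then work in the ambient $\Real^p$ (where $N$ is embedded) and write the harmonic map system as $u_{tt}+u_{\theta\theta}=\Gamma(u)(\nabla u,\nabla u)$ with a smooth bilinear form $\Gamma$; the right-hand side is pointwise of size $\sqrt{\varepsilon_1}\,\abs{\nabla u}$.

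Next I would expand $u(t,\theta)=\sum_{n\in\mathbb{Z}} u_n(t)\,e^{in\theta}$. Each Fourier mode satisfies $u_n''-n^2 u_n = f_n(t)$, where $f_n$ is the $n$-th Fourier coefficient of $\Gamma(u)(\nabla u,\nabla u)$. For $\abs{n}\geq 1$ the homogeneous equation $v''-n^2 v=0$ has solutions $e^{\pm nt}$, and a direct integration of the expression $\abs{v}^2+n^{-2}\abs{v'}^2$ over $[0,L]$, $[L,2L]$, $[2L,3L]$ yields a three-circle inequality with constant $\beta_n(L)\leq \tilde c\, e^{-\abs{n} L}$, uniform in $\abs{n}\geq 1$. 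Absorbing $f_n$ through an ODE stability (Duhamel) argument --- using $\sum_n \abs{f_n(t)}^2 \leq c\,\varepsilon_1 \int_{S^1}\abs{\nabla u(t,\cdot)}^2 d\theta$ that follows from the pointwise gradient bound --- and summing over $\abs{n}\geq 1$, one obtains the three-circle inequality for the contribution of the nonzero modes to the Dirichlet energy.

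The Pohozaev condition \eqref{eqn:poho} is essential to handle $n=0$: the homogeneous equation $u_0''=0$ admits $u_0(t)=a+bt$, and a nonzero linear component $bt$ would violate any three-circle estimate. Parseval applied directly to \eqref{eqn:poho} yields the pointwise identity
\[
	\abs{u_0'(t)}^2=\sum_{n\neq 0}\big(n^2\abs{u_n(t)}^2-\abs{u_n'(t)}^2\big),
\]
so $\abs{u_0'(t)}^2$ is pointwise controlled by the nonzero-mode Dirichlet energy at the same $t$. Integrating this over $[L,2L]$ and invoking the nonzero-mode estimate from the previous step gives the three-circle inequality for $\int_L^{2L}\abs{u_0'}^2\,dt$; adding the two contributions recovers \eqref{eqn:convex} for the full Dirichlet energy, with $\beta$ as small as desired once $L\geq L_0(\beta)$.

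The main technical obstacle is the coupling between Fourier modes introduced by the Cauchy-product structure of $\Gamma(u)(\nabla u,\nabla u)$: although each mode-wise three-circle constant $\beta_n\sim e^{-\abs{n} L}$ shrinks in $\abs{n}$, the inhomogeneity $f_n$ mixes all modes, so absorbing it requires a careful bootstrap. This is handled using $\abs{\nabla u}\leq c\sqrt{\varepsilon_1}$ to keep the nonlinear term a genuine perturbation together with a Gronwall-type closure; this is precisely the step that lies at the heart of the classical three-circle arguments in the cited references, and its execution carries over with only notational modifications to the present setting.
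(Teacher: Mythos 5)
The paper does not actually prove Lemma \ref{lem:3c} --- it is ``recalled'' as a known result with citations to Simon, Qing--Tian, Liu--Yin and Ai --- so there is no in-paper proof to compare yours against. That said, the ODE technique the paper develops for Proposition \ref{prop:sharp} (set $\gamma(t)=(\int_{\{t\}\times S^1}\abs{\nabla u}^2)^{1/2}$, establish $\gamma''\geq(1-c\,\varepsilon_1^{1/2})\gamma$ via \eqref{eqn:linwang} and the Poincar\'e inequality) immediately gives Lemma \ref{lem:3c} by ODE comparison, and this route avoids all Fourier-mode bookkeeping. Your Fourier-decomposition proof is a genuinely different (and also classical) route; its advantage is that the sharp mode-wise constants $\beta_n\sim e^{-\abs{n}L}$ come for free from the explicit solutions of $v''=n^2v$, whereas the ODE route gives only the aggregate convexity. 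Your handling of the $n=0$ mode via Parseval applied to \eqref{eqn:poho} is correct and is exactly the role the Pohozaev condition plays.

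The one real gap is the absorption of the inhomogeneity $f_n$. Because $\Gamma(u)(\nabla u,\nabla u)$ is quadratic in $\nabla u$, each $f_n$ involves a convolution over all modes, including $u_0$; and the forcing bound $\sum_n\abs{f_n(t)}^2\leq c\,\varepsilon_1\int_{S^1}\abs{\nabla u(t,\cdot)}^2$ is in terms of the \emph{full} energy density, not the nonzero-mode part you are trying to estimate. So the Duhamel/Gronwall closure requires first observing (from the Pohozaev identity) that the full energy density is comparable to the nonzero-mode energy density, and then proving a three-circle estimate for a fully coupled quantity such as $\sum_{n\neq 0}(n^2\abs{u_n}^2+\abs{u_n'}^2)$ --- which at that point is essentially the ODE argument for $\gamma(t)$ in disguise. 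Dismissing this as ``notational modifications'' undersells the step; it is precisely where the smallness $\varepsilon_1(N)$ enters, where $L_0(\beta)$ is determined, and where a careless argument loses the uniformity in the perturbation. You should either carry the Gronwall closure out, or replace the mode-by-mode analysis by the single ODE $\gamma''\geq(1-c\varepsilon_1^{1/2})\gamma$, which packages the Poincar\'e gap, the Pohozaev condition, and the nonlinear term into one clean inequality.
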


Lemma \ref{lem:3c} compares the energy on a piece of cylinder with the energy on adjacent pieces of the same length and concludes that (when the assumptions hold) at least one of the two pieces have (significantly) larger energy.

In this section, we prove a generalization of this fact. We compare the energy on a ghost bubble domain with the energy on cylinders that are directly connected to it. To be precise, we recall that for a sequence of harmonic maps $u_i$, the ghost bubble domain in Definition \ref{defn:gbd} is
\[
	B(y_i,2\sigma_i) \setminus \bigcup_{y\in \tilde{\mathcal C}} B(c_i(y), \delta \sigma_i),
\]
where $\tilde{\mathcal C}$ is the energy concentration set of the scaled sequence $v_i(y)=u_i(y_i + y \sigma_i)$. Since the problem under investigation is scaling invariant, we may assume that the ghost bubble domain is
\[
	B(y_i,2) \setminus \bigcup_{y\in \tilde{\mathcal C}} B(c_i(y), \delta),
\]
and
\begin{equation*}
	\lim_{i\to \infty} y_i =0.
\end{equation*}
This domain varies with $i$. However, the number of points in $\mathcal C$ is bounded, their distances to the origin are bounded and the distance between any pair is bounded from below. Hence, by passing to a subsequence if necessary, the ghost bubble domain approaches a limit,
\[
	\Omega_0= B(0,2) \setminus \bigcup_{y\in \tilde{\mathcal C}} B(y, \delta).
\]
For some small $\eta>0$ to be determined later, we set
\begin{equation}\label{eqn:omegaj}
	\Omega_j= B(0, 2 \eta^{-j}) \setminus \bigcup_{y\in \tilde{\mathcal C}} B(y,\delta \eta^j).
\end{equation}

In what follows, we prove estimates for the energy of harmonic map $u$ defined on $\Omega_j$ (see Lemma \ref{lem:upperbound} and Lemma \ref{lem:4d}). The constants appeared there depend on $\Omega_0$, or more precisely, depend on $\tilde{\mathcal C}$ and $\delta$, which in turn depend on the particular sequence. These estimates hold for the original sequence $u_i$ for sufficiently large $i$ with the same set of constants. It is in this sense that we say the estimates depend on the sequence $u_i$.

The following is a set of natural assumptions under which our estimates hold and they are verified easily in the construction of bubble tree.

(S1) 
\begin{equation*}
	\int_{\Omega_0} \abs{\nabla u}^2 \leq \varepsilon_1;
\end{equation*}

(S2) for any $y\in \tilde{\mathcal C}$ and $\rho\in (\delta \eta^3,\delta/2)$,
\begin{equation*}
	\int_{B(y,2\rho)\setminus B(y,\rho)} \abs{\nabla u}^2 \leq \varepsilon_1;
\end{equation*}

(S3) for any $\eta^{-3}\geq \rho\geq 2$,
\begin{equation*}
	\int_{B(0,2\rho)\setminus B(0,\rho)} \abs{\nabla u}^2 \leq \varepsilon_1.
\end{equation*}

Our first result is the following lemma.

\begin{lem}
	\label{lem:upperbound}
	Let $u$ be a harmonic map defined on $\Omega_3$ satisfying (S1)-(S3) for some small $\varepsilon_1$ depending only on $N$. There is a constant $C_1$ depending on $N$ and $\Omega_0$ but {\bf not} on $\eta$ such that
	\begin{equation*}
		\int_{\Omega_1} \abs{\nabla u}^2 \leq C_1 \int_{\Omega_2\setminus \Omega_1} \abs{\nabla u}^2.
	\end{equation*}
\end{lem}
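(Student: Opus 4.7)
The plan is to prove Lemma \ref{lem:upperbound} by a blow-up/compactness argument that reduces the claim to unique continuation for harmonic functions on the multi-connected ghost bubble domain. Suppose for contradiction that no such uniform $C_1$ exists; then there is a sequence of harmonic maps $u_n$ on $\Omega_3$ satisfying (S1)--(S3), with parameters $\eta_n \in (0,\eta_0]$ and fixed $\Omega_0$, such that $E(\Omega_1,u_n)/E(A_2,u_n) \to \infty$, where $A_2 := \Omega_2 \setminus \Omega_1$. I would choose $q_n \in \mathbb R^p$ as the mean of $u_n$ over a fixed unit-length cross section of one cylindrical component of $A_2$ (measured in the metric $\bar g_i$, in which each cylindrical neck becomes a cylinder of circumference $2\pi$ and length $L_n = \log \eta_n^{-1}$), and set $\lambda_n := E(\Omega_1,u_n)^{1/2}$, $v_n := (u_n - q_n)/\lambda_n$. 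Then $E(\Omega_1,v_n) = 1$ and $E(A_2,v_n) \to 0$, while the scaled equation $\Delta v_n = \lambda_n A(u_n)(\nabla v_n, \nabla u_n)$ stays controlled because $\lambda_n$ is bounded above by $C\varepsilon_1^{1/2}$.

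I would then extract a subsequential limit. The $\bar g_i$-Poincar\'e inequality on the fixed unit slab where $q_n$ was chosen (with universal constant), combined with the energy bounds on each cylindrical slab and on $\Omega_0$, gives uniform $H^1_{\mathrm{loc}}$ bounds on $v_n$ over $\Omega_0$ and any fixed bounded subregion of the necks. The small-energy $\varepsilon$-regularity for harmonic maps yields uniform $C^k_{\mathrm{loc}}$ bounds wherever the per-unit-ball energy of $u_n$ is at most $\varepsilon_1$, producing strong $H^1_{\mathrm{loc}}$ convergence of a subsequence to a limit $v_\infty$. Either $\lambda_n \to 0$, in which case $v_\infty$ is an $\mathbb R^p$-valued harmonic function, or $\lambda_n \to \lambda_\infty > 0$, in which case $v_\infty$ is a harmonic map with target a rescaled copy of $N$; in both cases the limit satisfies $E(\Omega_1,v_\infty) = 1$ and $\nabla v_\infty \equiv 0$ on $A_2$. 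Since $v_\infty$ is real-analytic in the interior and $\Omega_3$ is connected, unique continuation forces $v_\infty$ to be constant throughout $\Omega_3$, contradicting $E(\Omega_1,v_\infty)=1$.

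The principal obstacle is to make this limit argument uniform in $\eta_n$, in particular to handle $\eta_n \to 0$, when the cylindrical necks in $\bar g_i$ become arbitrarily long and $A_2$ recedes to infinity along each neck. I would address this by applying the three-circle Lemma \ref{lem:3c} iteratively to each neck: under the small-energy hypothesis (S2)--(S3) it gives exponential decay/growth for the Fourier mode coefficients of $v_n$ along the neck, yielding uniform control on each unit slab independent of $L_n$ and ensuring that the limit domain is $\Omega_0$ with half-infinite cylindrical ends attached, on which $v_\infty$ has finite total energy. Unique continuation for a harmonic function on this noncompact limit domain then propagates the vanishing of $\nabla v_\infty$ on the recessed $A_2$ back through the necks to the core, completing the contradiction. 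A careful bookkeeping of the normalization is needed to avoid losing information as $\lambda_n$ degenerates together with $\eta_n$.
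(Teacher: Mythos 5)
Your overall strategy --- proof by contradiction, rescaling, extracting a limit, deriving a contradiction from the limit --- matches the paper's, and in the case where $\eta_n$ is bounded away from zero your argument essentially reproduces the paper's Case 1. The genuine gap is in the crucial case $\eta_n\to 0$, which is also where your route diverges. When $\eta_n\to 0$, the rescaled domain $\Omega_1(\eta_n)$ exhausts $\Real^2\setminus\tilde{\mathcal C}$ (equivalently, $\Omega_0$ with half-infinite cylindrical ends), and the annulus $A_2=\Omega_2\setminus\Omega_1$ \emph{recedes to infinity along each neck}. Your closing step --- ``unique continuation for a harmonic function on this noncompact limit domain then propagates the vanishing of $\nabla v_\infty$ on the recessed $A_2$ back through the necks to the core'' --- therefore has no content: $A_2$ is not part of the limit domain and $v_\infty$ has no trace there. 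What you in fact obtain in the limit is a finite-Dirichlet-energy harmonic map (or harmonic function) on $\Real^2\setminus\tilde{\mathcal C}$; by the removable singularity theorem and the gap theorem (or the absence of nonconstant finite-energy harmonic functions on $\Real^2$), this limit is automatically a constant. Unique continuation then gives no contradiction. The contradiction must instead come from showing that the normalization $E(\Omega_1,v_n)\sim 1$ is not lost at infinity, i.e.\ that a definite fraction of the energy persists on a fixed bounded region. This is precisely the paper's inequality \eqref{eqn:suffice}, which the paper proves by applying the energy identity theorem simultaneously to the $m+1$ annular necks: if both the inner and the outer annular ends carry little energy, so does each entire annulus, contradicting the normalization $\int_{\Omega_1}\abs{\nabla\tilde u_i}^2=\varepsilon_1$.

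Your invocation of the three circle Lemma~\ref{lem:3c} is aimed at the wrong target: you use it to get ``uniform control on each unit slab'' and ``finite total energy'' of $v_\infty$, neither of which is the issue; the issue is a positive lower bound for the energy that survives into the limit. One \emph{can} repurpose Lemma~\ref{lem:3c} for that --- the valley-shaped energy profile it forces along each neck, combined with $E(A_2,v_n)\to 0$, pushes the energy of $v_n$ toward $\Omega_0$ uniformly in $\eta_n$ --- but that argument is not what you wrote, it is delicate to make uniform, and it imports the Pohozaev hypothesis \eqref{eqn:poho}, which is not among (S1)--(S3). The paper's proof of Lemma~\ref{lem:upperbound} deliberately avoids Lemma~\ref{lem:3c} for exactly this reason (Lemma~\ref{lem:3c} is reserved for the later Lemma~\ref{lem:4d}). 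A minor remark: the mean-subtraction $q_n$ is superfluous since Dirichlet energy is translation-invariant in the target; the paper's multiplicative normalization $\tilde u_i=\varepsilon_1^{1/2}u_i/\bigl(\int_{\Omega_1}\abs{\nabla u_i}^2\bigr)^{1/2}$ is cleaner because it keeps $\tilde u_i$ in the small-energy regime, so $\varepsilon$-regularity and compactness apply directly to $\tilde u_i$ rather than passing through $C^k$ bounds on $u_n$.
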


\begin{proof}
	Assume that the lemma is false. Then, there is a sequence of $\eta_i>0$ and a sequence of harmonic maps $u_i$ satisfying (S1)-(S3) such that
	\begin{equation}\label{eqn:nottrue}
		\int_{\Omega_1(\eta_i)} \abs{\nabla u_i}^2 \geq i \int_{\Omega_2(\eta_i)\setminus \Omega_1(\eta_i)} \abs{\nabla u_i}^2.
	\end{equation}
	\begin{rem}
	(1) We have used the notation $\Omega_1(\eta_i)$ and $\Omega_2(\eta_i)$ to emphasize the dependence on $\eta_i$. When this dependence is clear from the context, we simply write $\Omega_1$ and $\Omega_2$.

	(2) The sequence $u_i$ is not the sequence in the main theorem. We recycle the notation for simplicity and this usage is valid only in this proof.
	\end{rem}

To get a contradiction, we distinguish two cases.

{\bf Case 1:} $\liminf_{i\to \infty}\eta_i>0$. By passing to a subsequence, we assume that $\eta_i\to \eta$. 

	The $\varepsilon$-regularity theorem of harmonic maps and (S1)-(S3) together imply the existence of a smooth limit $u_\infty$ of $u_i$ defined on $\Omega_2(\eta)$, which is also a harmonic map.
	If $\int_{\Omega_1(\eta_i)} \abs{\nabla u_i}^2$ has a positive lower bound, $u_\infty$ is nontrivial. However, \eqref{eqn:nottrue} implies that $u_\infty$ is constant map on $\Omega_2\setminus \Omega_1$. This is a contradiction to the unique continuation theorem(\cite{sampson1978some}).

	If $\int_{\Omega_1} \abs{\nabla u_i}^2\to 0$, then we scale the ambient space $\Real^p$ in which $N$ is embedded and set 
	\begin{equation}\label{eqn:scaleui}
		\tilde{u}_i= \frac{\varepsilon_1^{1/2} u_i}{\left( \int_{\Omega_1} \abs{\nabla u_i}^2 \right)^{1/2}}.
	\end{equation}
	After the scaling, we have
	\begin{equation}\label{eqn:tildeui}
		\int_{\Omega_1} \abs{\nabla \tilde{u}_i}^2 =\varepsilon_1,
	\end{equation}
	which together with \eqref{eqn:nottrue} implies	
	\begin{equation}
		\int_{\Omega_2} \abs{\nabla \tilde{u}_i}^2 \leq 2\varepsilon_1.
		\label{eqn:uibound}
	\end{equation}
	Notice that $\tilde{u}_i$ is now a harmonic map into a different target manifold $N_i$, which converges to a linear subspace of $\Real^p$ as $i\to \infty$. Since the small constant in the $\varepsilon$-regularity theorem is uniform for all $N_i$, \eqref{eqn:uibound} provides the uniform estimate that yields a limit $\tilde{u}_\infty$ defined on $\Omega_2$. Due to the scaling, $\tilde{u}_\infty$ is a harmonic function.
	By \eqref{eqn:tildeui}, the limit $\tilde{u}_\infty$ is not trivial.
	However, its restriction to $\Omega_2\setminus \Omega_1$ is constant. This is impossible and we get a contradiction.

	{\bf Case 2:} $\eta_i\to 0$.
	We define $\tilde{u}_i$ as in \eqref{eqn:scaleui}. The same argument as above gives a limit $\tilde{u}_\infty$, which is a harmonic map if $\int_{\Omega_1} \abs{\nabla u_i}^2$ has a positive lower bound and is a harmonic function if otherwise. Since $\eta_i\to 0$, the domain $\Omega_1(\eta_i)$ converges to $\Real^2 \setminus \tilde{\mathcal C}$ and
	\begin{equation*}
		\int_{\Real^2 \setminus \tilde{\mathcal C}} \abs{\nabla \tilde{u}_\infty}^2 \leq \varepsilon_1.
	\end{equation*}

	Due to the removable singularity theorem and the gap theorem of harmonic map, or the fact that there is no nontrivial harmonic function on $\Real^2$ with bounded Dirichlet energy, $\tilde{u}_\infty$ must be constant map/function (if $\varepsilon_1$ is small). To get a contradiction, it suffices to prove
\begin{equation}\label{eqn:suffice}
	\int_{B(0,4)\setminus \left( \bigcup_{y\in \tilde{\mathcal C}} B(y,\delta/2) \right)} \abs{\nabla \tilde{u}_i}^2 \geq \frac{1}{\tilde{c}_1} \varepsilon_1.
\end{equation}
Here $\tilde{c}_1$ is a universal constant that will be made clear in a minute.

If \eqref{eqn:suffice} is not true, 
\begin{equation}\label{eqn:oneend}
	\int_{(B(0,4)\setminus B(0,2)) \cup\left( \bigcup_{y\in \tilde{\mathcal C}} B(y,\delta)\setminus B(y,\delta/2) \right) } \abs{\nabla \tilde{u}_i}^2 
	\leq \frac{1}{\tilde{c}_1}\varepsilon_1.
\end{equation}

By \eqref{eqn:nottrue} and \eqref{eqn:tildeui}, we have
\begin{equation*}
	\varepsilon_1 =\int_{\Omega_1(\eta_i)} \abs{\nabla \tilde{u}_i}^2 \geq i \int_{\Omega_2(\eta_i)\setminus \Omega_1(\eta_i)} \abs{\nabla \tilde{u}_i}^2,
\end{equation*}
which implies that for $i>\tilde{c}_1$,
\begin{equation}\label{eqn:theother}
	\int_{B(0,\eta_i^{-2})\setminus B(0,\eta_i^{-2}/2)\cup \left( \bigcup_{y\in \tilde{\mathcal C}} B(y,2\eta^2_i \delta) \setminus B(y,\eta^2_i \delta) \right)} \abs{\nabla \tilde{u}_i}^2 \leq \frac{1}{\tilde{c}_1}\varepsilon_1.
\end{equation}
If $m$ is the number of points in $\tilde{\mathcal C}$, we now have $m+1$ annular domains,
\[
	B(0,\eta_i^{-2})\setminus B(0,2)\quad  \mbox{and} \quad B(y,\delta)\setminus B(y,\eta_i^2\delta) \quad \mbox{for each}\, y\in \tilde{\mathcal C}.
\]
As in Case 1, we still have \eqref{eqn:uibound}, which allows us to apply the energy identity theorem to these $m+1$ annular domains simultaneously. Due to \eqref{eqn:oneend} and \eqref{eqn:theother}, by choosing $\tilde{c}_1$ large, we can have
\begin{equation*}
	\int_{(B(0,\eta_i^{-1})\setminus B(0,4)) \cup \left( \bigcup_{y\in \tilde{\mathcal C}} B(y,\delta/2) \setminus B(y, \eta_i \delta) \right)} \abs{\nabla \tilde{u}_i}^2 < \frac{1}{2}\varepsilon_1.
\end{equation*}
Since we may choose $\tilde{c}_1>2$, the above inequality and the assumed falsity of \eqref{eqn:suffice} imply that
\begin{equation*}
	\int_{\Omega_1} \abs{\nabla \tilde{u}_i}^2 <\varepsilon_1,
\end{equation*}
which is a contradiction to \eqref{eqn:tildeui}.
\end{proof}

An unfavorable aspect of the above lemma is that we have no control on the size of the constant $C_1$, because the proof is by contradiction. On the other hand, $C_1$ does not depend on $\eta$. By choosing $\eta$ small, we obtain the following counterpart of Lemma \ref{lem:3c}.

\begin{lem}
	\label{lem:4d}
	Suppose that $u$ satisfies (S1)-(S3). For any $\beta>0$, there is $\eta_0>0$ small such that for all $\eta<\eta_0$,
	\begin{equation*}
		\int_{\Omega_2(\eta)} \abs{\nabla u}^2 \leq \beta \int_{\Omega_3(\eta)\setminus \Omega_2(\eta)} \abs{\nabla u}^2.
	\end{equation*}
\end{lem}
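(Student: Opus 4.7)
The plan is to combine the qualitative estimate of Lemma \ref{lem:upperbound} with a quantitative iteration of Lemma \ref{lem:3c} on the cylindrical ends of $\Omega_3 \setminus \Omega_0$. First, since $\Omega_2$ decomposes as $\Omega_1 \sqcup (\Omega_2 \setminus \Omega_1)$, Lemma \ref{lem:upperbound} produces a constant $C_1 = C_1(N,\Omega_0)$ \emph{independent of $\eta$} such that $\int_{\Omega_1}\abs{\nabla u}^2 \leq C_1 \int_{\Omega_2\setminus\Omega_1}\abs{\nabla u}^2$, and hence both $\int_{\Omega_1\setminus\Omega_0}\abs{\nabla u}^2 \leq C_1 \int_{\Omega_2\setminus\Omega_1}\abs{\nabla u}^2$ and $\int_{\Omega_2}\abs{\nabla u}^2 \leq (1+C_1) \int_{\Omega_2\setminus\Omega_1}\abs{\nabla u}^2$. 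It therefore suffices to bound $\int_{\Omega_2\setminus\Omega_1}\abs{\nabla u}^2$ by $\beta' \int_{\Omega_3\setminus\Omega_2}\abs{\nabla u}^2$ for an arbitrarily small $\beta'$; the final loss is absorbed via $\beta \mapsto 2\beta'(1+C_1)$.

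Next, I would exploit that $\Omega_3 \setminus \Omega_0$ is a disjoint union of $m+1$ conformal cylinders: the outer one $\set{2 \leq \abs{z} \leq 2\eta^{-3}}$ and, for each $y \in \tilde{\mathcal C}$, the inner one $\set{\delta\eta^3 \leq \abs{z-y} \leq \delta}$. In cylindrical coordinates each becomes $[0,3L]\times S^1$ with $L = \log\eta^{-1}$, and the three shells $\Omega_j \setminus \Omega_{j-1}$ ($j=1,2,3$) correspond to three consecutive sub-cylinders of length $L$. Hypotheses (S2)--(S3), after shrinking $\varepsilon_1$ by a fixed geometric factor, supply the small-energy assumption \eqref{eqn:DT3c} on every unit-length sub-cylinder. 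Given a target $\beta_2 > 0$, I would choose $L_0 = L_0(\beta)$ from Lemma \ref{lem:3c} with some fixed $\beta < 1/2$, subdivide $[0,3L]$ into $3N$ pieces of length $L_0$ (feasible once $\eta$ is small), and iterate the discrete convexity $e_k \leq \beta(e_{k-1}+e_{k+1})$ on the energies $e_k$ of the pieces. Comparison with the extremal solution of this linear recurrence, whose characteristic roots satisfy $\lambda_+\lambda_- = 1$ and $\lambda_- < 1$, together with the discrete maximum principle, yields $e_k \leq \lambda_-^{\min(k,3N-1-k)}(e_0 + e_{3N-1})$. Summing over the middle third and over the $m+1$ cylinders,
\[
\int_{\Omega_2 \setminus \Omega_1}\abs{\nabla u}^2 \leq \beta_2 \left( \int_{\Omega_1\setminus\Omega_0}\abs{\nabla u}^2 + \int_{\Omega_3\setminus\Omega_2}\abs{\nabla u}^2 \right)
\]
provided $N\lambda_-^N < \beta_2/(m+1)$, which holds for $\eta$ sufficiently small.

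Feeding the first-step bound into the second and absorbing the $\Omega_1\setminus\Omega_0$ term gives $(1-\beta_2 C_1)\int_{\Omega_2\setminus\Omega_1}\abs{\nabla u}^2 \leq \beta_2 \int_{\Omega_3\setminus\Omega_2}\abs{\nabla u}^2$. Taking $\beta_2 < 1/(2C_1)$ produces $\int_{\Omega_2\setminus\Omega_1}\abs{\nabla u}^2 \leq 2\beta_2 \int_{\Omega_3\setminus\Omega_2}\abs{\nabla u}^2$ and hence $\int_{\Omega_2}\abs{\nabla u}^2 \leq 2\beta_2(1+C_1)\int_{\Omega_3\setminus\Omega_2}\abs{\nabla u}^2$. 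Setting $\beta_2 = \beta/[2(1+C_1)]$ and then choosing $\eta_0$ small enough for the second step to deliver this $\beta_2$ completes the argument.

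The principal obstacle is the Pohozaev hypothesis \eqref{eqn:poho} required by Lemma \ref{lem:3c}: in cylindrical coordinates this is the vanishing of the constant Laurent coefficient of the Hopf differential, which for a generic harmonic map on an annulus need not hold. I would handle this either by showing that the offending coefficient is uniformly small under (S1)--(S3) (using that the energy on the neck is already small, so the Hopf differential is controlled) and absorbing it into $\beta_2$, or by establishing a variant of Lemma \ref{lem:3c} in which this coefficient enters as a conserved additive shift that can be subtracted off before the convexity argument is applied. Once this is settled, everything else is the routine algebraic combination of the two quantitative estimates above.
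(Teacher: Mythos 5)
Your overall strategy matches the paper exactly: first invoke Lemma~\ref{lem:upperbound} to control $\int_{\Omega_1}\abs{\nabla u}^2$ and hence $\int_{\Omega_2}\abs{\nabla u}^2$ in terms of $\int_{\Omega_2\setminus\Omega_1}\abs{\nabla u}^2$, then apply the three circle lemma on the $m+1$ cylindrical ends of $\Omega_3\setminus\Omega_0$ to get $\int_{\Omega_2\setminus\Omega_1}\abs{\nabla u}^2 \leq \tilde\beta\bigl(\int_{\Omega_1\setminus\Omega_0} + \int_{\Omega_3\setminus\Omega_2}\bigr)\abs{\nabla u}^2$, absorb the $\Omega_1\setminus\Omega_0$ term, and choose $\tilde\beta$ small. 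The algebra in your final two paragraphs is identical in spirit to \eqref{eqn:well3}--\eqref{eqn:well4} and the line that follows them.

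One unnecessary detour: you re-derive a small-$\beta_2$ version of the three circle estimate by subdividing $[0,3L]$ into $3N$ sub-cylinders of fixed length $L_0$, running the discrete recurrence $e_k \leq \beta(e_{k-1}+e_{k+1})$, and comparing with the extremal solution. But Lemma~\ref{lem:3c} is already stated in the quantified form ``for any $\beta>0$ there is $L_0$ such that for all $L>L_0$ \eqref{eqn:convex} holds.'' Taking $L=\log(1/\eta)$ on each cylinder, with the middle third $\Omega_2\setminus\Omega_1$ and the outer thirds $\Omega_1\setminus\Omega_0$ and $\Omega_3\setminus\Omega_2$, gives the desired inequality directly once $\eta < e^{-L_0(\tilde\beta)}$. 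The iteration re-proves a consequence that the cited lemma already hands you.

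Your closing paragraph raises a real issue, and it is to your credit that you noticed it: Lemma~\ref{lem:3c} requires \eqref{eqn:poho}, and the hypotheses (S1)--(S3) are purely energy bounds that do not force the Pohozaev identity. For a generic harmonic map on an annulus the constant Laurent coefficient of the Hopf differential need not vanish, and indeed the conclusion of the three circle lemma \emph{fails} without it (take $u = ct$ for small $c$: every third has the same energy, yet the Pohozaev quantity is $-2\pi c^2 \neq 0$). What you should know is that the paper's own proof of Lemma~\ref{lem:4d} makes exactly the same silent use of Lemma~\ref{lem:3c}; the Pohozaev condition is implicitly carried along as part of the bubbling setup, where $u$ extends as a harmonic map to a full disk (or to the closed surface $\Sigma$), so that the Hopf differential in the cylindrical variable vanishes to second order at the puncture, or is uniformly small for higher genus, and $\int_{\set{t}\times S^1}\abs{\partial_\theta u}^2 - \abs{\partial_t u}^2$ is identically zero (respectively, $O(r^2)$) rather than merely bounded. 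So your instinct to treat \eqref{eqn:poho} either as an extra standing hypothesis or as an approximately-satisfied condition with an absorbable error is exactly the right reading; it is a hypothesis that ought to appear alongside (S1)--(S3), and once you grant it your proof sketch and the paper's proof coincide.
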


\begin{proof}
	By Lemma \ref{lem:upperbound},
	\begin{equation}\label{eqn:well3}
		\int_{\Omega_1\setminus \Omega_0} \abs{\nabla u}^2 \leq C_1 \int_{\Omega_2\setminus \Omega_1} \abs{\nabla u}^2.
	\end{equation}

	For $\tilde{\beta}$ to be determined, Lemma \ref{lem:3c} implies the existence of some $\eta$ such that
	\begin{equation}\label{eqn:well4}
		\int_{\Omega_2\setminus \Omega_1} \abs{\nabla u}^2 \leq \tilde{\beta} \left( \int_{\Omega_1\setminus \Omega_0} \abs{\nabla u}^2 + \int_{\Omega_3\setminus \Omega_2} \abs{\nabla u}^2 \right).	
	\end{equation}
	By \eqref{eqn:well3} and \eqref{eqn:well4}, we have
	\begin{equation*}
		\int_{\Omega_2\setminus \Omega_1} \abs{\nabla u}^2 \leq \frac{\tilde{\beta}}{1-C_1\tilde{\beta}} \int_{\Omega_3\setminus \Omega_2} \abs{\nabla u}^2,
	\end{equation*}
	which implies (using Lemma \ref{lem:upperbound} again)
	\begin{equation*}
		\int_{\Omega_2} \abs{\nabla u}^2 \leq \frac{(1+C_1)\tilde{\beta}}{1- C_1\tilde{\beta}} \int_{\Omega_3\setminus \Omega_2} \abs{\nabla u}^2.
	\end{equation*}
	It suffices to choose $\tilde{\beta}$ small so that
	\begin{equation*}
		\frac{(1+C_1)\tilde{\beta}}{1-C_1\tilde{\beta}}<\beta.
	\end{equation*}
	Taking $\tilde{\beta}$ as the $\beta$ is Lemma \ref{lem:3c}, we obtain an $\eta$ such that the above computation works.
\end{proof}

\subsection{Optimal decay estimate}\label{sub:optimal}
In this section, we are interested in a long cylinder $[0,\tilde{L}]\times S^1$. Assume that $\tilde{L}$ is a multiple of some $L>2$ such that
\begin{equation*}
	[0,\tilde{L}]\times S^1= \bigcup_{i=1}^m W_i,
\end{equation*}
where $W_i= [(i-1)L, iL]\times S^1$. 

The aim of this section is to prove the following.
\begin{lem}
	\label{lem:mild} There exists some $\varepsilon_1(N)>0$. If $u$ is a harmonic map from $[0,\tilde{L}]\times S^1$ to $N$ satisfying that
	\begin{equation}
		\int_{W_i} \abs{\nabla u}^2 < \varepsilon_1(N),\qquad \int_{\set{t}\times S^1} \abs{\partial_t u}^2 - \abs{\partial_\theta u}^2 =0
		\label{eqn:smalle}
	\end{equation}
	and
	\begin{equation}
		\int_{W_{i}} \abs{\nabla u}^2 \leq \frac{1}{2} \int_{W_{i+1}} \abs{\nabla u}^2, \qquad \forall i=1,2,\cdots,m-1,
		\label{eqn:growth}
	\end{equation}
	then
	\begin{equation}
		\int_{W_{1}} \abs{\nabla u}^2 \leq C(L)e^{-2 \tilde{L}} \int_{W_m} \abs{\nabla u}^2.
		\label{eqn:optimal}
	\end{equation}
\end{lem}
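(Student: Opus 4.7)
The estimate \eqref{eqn:optimal} asserts the sharp exponential growth rate $e^{2t}$ along the cylinder, which is exactly the rate of the smallest nontrivial Fourier mode on $S^1$. The role of the Pohozaev condition in \eqref{eqn:smalle} is to rule out slower-growing contributions (in particular a nonconstant $k=0$ mode), while the role of the growth hypothesis \eqref{eqn:growth} is to exclude the scenario in which the decaying modes dominate. The plan is to Fourier-expand $u$ in $\theta$, split the slice energy into a growing part and a decaying part, and then use \eqref{eqn:growth} to show that the growing part controls the slab energies uniformly.

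By the $\varepsilon$-regularity theorem, the small-energy hypothesis in \eqref{eqn:smalle} gives $\abs{\nabla u}\leq c\sqrt{\varepsilon_1}$ pointwise on each $W_i$, so the harmonic map system $u_{tt}+u_{\theta\theta}=A(u)(\nabla u,\nabla u)$ is a small perturbation of the linear system. Writing $u(t,\theta)=\sum_{k} c_k(t) e^{ik\theta}$ in the ambient $\mathbb R^p$, each Fourier coefficient satisfies $\ddot c_k - k^2 c_k = F_k(t)$ with $F_k$ quadratic in $\nabla u$. The Pohozaev condition in \eqref{eqn:smalle} is an exact identity for harmonic maps (since $A(u)(\nabla u,\nabla u)\perp u_t$), and mode by mode it cancels the cross terms between the growing $e^{kt}$ and decaying $e^{-kt}$ halves of $c_k$, yielding (up to an error of size $O(\sqrt{\varepsilon_1})$) the splitting
\[
\phi(t) := \int_{\{t\}\times S^1} \abs{\nabla u}^2\, d\theta \;\approx\; \mathcal A(t) + \mathcal B(t), \qquad \mathcal A(t)=\sum_{k\geq 1}\alpha_k e^{2kt}, \quad \mathcal B(t)=\sum_{k\geq 1}\beta_k e^{-2kt},
\]
with $\alpha_k,\beta_k\geq 0$.

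Setting $A_i = \int_{W_i}\mathcal A\,dt$ and $B_i=\int_{W_i}\mathcal B\,dt$, a direct computation (each mode is a pure exponential, so the slab ratios are weighted averages of $e^{\pm 2kL}$ minimized at $k=1$) yields $A_{i+1}\geq e^{2L} A_i$ and $B_{i+1}\leq e^{-2L} B_i$. Substituting into \eqref{eqn:growth},
\[
A_i + B_i \;\leq\; \tfrac{1}{2}(A_{i+1}+B_{i+1}) \;\leq\; \tfrac{1}{2}\bigl(e^{2L} A_i + e^{-2L} B_i\bigr),
\]
which, for $L$ large, forces $B_i \leq C(L) A_i$ on every slab. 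Consequently $\int_{W_i}\abs{\nabla u}^2 \asymp A_i$, and
\[
\int_{W_1}\abs{\nabla u}^2 \;\leq\; (1+C(L)) A_1 \;\leq\; (1+C(L))\, e^{-2L(m-1)} A_m \;\leq\; C(L)\, e^{2L} e^{-2\tilde L}\int_{W_m}\abs{\nabla u}^2,
\]
which is \eqref{eqn:optimal} after absorbing the prefactor $e^{2L}$ into $C(L)$.

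The main obstacle is the nonlinear error $F_k$: since the exponential growth/decay of each mode is only approximate for harmonic maps, a naive iteration could let the error compound across the $m = \tilde L/L$ slabs and defeat the sharp constant. The remedy is to choose $\varepsilon_1$ small, depending only on $N$, so that the $O(\sqrt{\varepsilon_1})$ error in the splitting of $\phi(t)$ enters only additively in the slab-by-slab comparison $A_{i+1}\geq e^{2L}A_i$ (and likewise for $B$), and is therefore absorbed into $C(L)$ in Step 3 rather than being exponentiated in $m$. Alternatively, one can avoid Fourier altogether and work directly with the differential inequality $\phi''\geq 4\phi - o(1)\phi$ that follows from the Bochner formula on the cylinder together with Pohozaev, then use \eqref{eqn:growth} to ensure that $G:=\phi'+2\phi$ is nonnegative on some slab and grows like $e^{2t}$ thereafter; either route reduces the proof to the same arithmetic carried out above.
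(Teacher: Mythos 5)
There is a genuine gap: you correctly identify the central obstacle—that the nonlinear error in the mode-by-mode splitting could compound across $m=\tilde L/L$ slabs and destroy the sharp constant—but the proposed remedy does not actually work, and the mechanism the paper uses to overcome it is absent from your argument. The differential inequality one obtains directly from the harmonic map equation and Pohozaev is $\gamma''\geq(1-c\,\varepsilon_1^{1/2})\gamma$ for $\gamma=(\tfrac12\int_{\{t\}\times S^1}|\nabla u|^2)^{1/2}$ (equivalently $\phi''\geq(4-o(1))\phi$), and ODE comparison then yields decay only at the non-sharp rate $e^{-\sqrt{1-c\varepsilon_1^{1/2}}\,t}$. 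The loss is a factor $e^{c'\varepsilon_1^{1/2}T}$, which is exponential in $T$; it is \emph{not} additive and cannot be absorbed into a constant depending only on $L$ when $\tilde L$ is arbitrarily long. So the claim that the error ``enters only additively'' and ``is therefore absorbed into $C(L)$ rather than being exponentiated in $m$'' is precisely the point that needs proof, and as stated it is false for the naive iteration. (There is also a secondary sign problem in your slab arithmetic: from $A_{i+1}\geq e^{2L}A_i$ the chain $\tfrac12(A_{i+1}+B_{i+1})\leq\tfrac12(e^{2L}A_i+e^{-2L}B_i)$ does not follow—the second inequality requires the reverse bound on $A_{i+1}$.)

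The missing idea is the bootstrap carried out in the paper's Proposition~\ref{prop:sharp}. One first obtains the non-sharp barrier $\gamma\leq h$ with $h''=\tfrac{9}{10}h$. Elliptic estimates then show that $\abs{\partial_\theta^2 u}$ is controlled pointwise by the decaying function $h(t)$ (with the endpoint hypotheses \eqref{eqn:difficult}, themselves obtained from the reductions (A1)--(A3) and an induction on $m$). Feeding this back into the inequality for $\gamma''$ upgrades the error term from $c\,\varepsilon_1^{1/2}\gamma$ (linear in $\gamma$ with a small fixed coefficient) to $c\,C_2(L)\,h(t)^2$ (quadratic, and decaying twice as fast as $\gamma$). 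A second ODE comparison with $\gamma+c_5C_2(L)H$, where $H''=\tfrac95 H\geq (h^2)''$, then yields the sharp barrier $g''=g$ at the cost of only a bounded multiplicative constant, because the quadratic error integrates to something uniformly small rather than compounding along the cylinder. Without this two-step improvement, neither the Fourier decomposition nor the direct differential inequality reaches the rate $e^{-2\tilde L}$ with a constant independent of $\tilde L$.
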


By some simple arguments, we justify some further assumptions that helps in the proof.

(A1) Obviously, by \eqref{eqn:growth}, it is enough to show 
	\begin{equation}
		\int_{W_{2}} \abs{\nabla u}^2 \leq C(L)e^{- 2 \tilde{L}} \int_{W_m} \abs{\nabla u}^2.
		\label{eqn:optimal2}
	\end{equation}
	Due to \eqref{eqn:growth} again, this is trivial if $\tilde{L}\leq 5L$. Hence, we can argue by induction and assume that \eqref{eqn:optimal2} is proved for $\tilde{L}=(m-1)L$. Notice that the constant $C(L)$ in \eqref{eqn:optimal2} should not depend on $m$. We may assume further that
	\begin{equation}\label{eqn:ass1}
		\int_{W_3} \abs{\nabla u}^2 \leq e^{2L} \int_{W_2} \abs{\nabla u}^2.
	\end{equation}
	If otherwise, we may apply the induction hypothesis to the cylinder $[L,\tilde{L}]\times S^1$ to see
\[
\int_{W_3}\abs{\nabla u}^2 \leq C(L)e^{-2 (\tilde{L}-L)} \int_{W_m} \abs{\nabla u}^2, 
\]
from which \eqref{eqn:optimal2} follows. Using the elliptic estimate, \eqref{eqn:growth} and \eqref{eqn:ass1}, we have
\begin{equation}\label{eqn:goodleft}
	\sup_{t\in [L,2L]} \sup_{[t,t+1]\times S^1} \abs{\partial_\theta^2 u}^2 \leq C_1(L) \int_{W_2} \abs{\nabla u}^2.
\end{equation}

(A2) Near the other end of the cylinder, we consider a natural number $m_1$ such that $m-m_1$ is bounded by a constant depending on $L$ and
\begin{equation}
	\label{eqn:ass_up}
	\int_{W_{m_1}} \abs{\nabla u}^2 \leq \frac{\varepsilon_1}{C_1(L)\cdot L}.
\end{equation}

(A3) By a similar argument as in (A1), we may assume that
\begin{equation}
	\int_{W_{m_1}} \abs{\nabla u}^2 \leq e^{2L} \int_{W_{m_1-1}} \abs{\nabla u}^2.
	\label{eqn:ass_right}
\end{equation}
Together with \eqref{eqn:growth}, it implies that 
\begin{equation}\label{eqn:goodright}
	\sup_{t\in [(m_1-2)L,(m_1-1)L]} \sup_{[t-1,t]\times S^1} \abs{\partial_\theta^2 u}^2 \leq C_1(L) \int_{W_{m_1-1}} \abs{\nabla u}^2.
\end{equation}

For the proof of Lemma \ref{lem:mild}, it suffices to show
\begin{equation}
	\int_{W_2} \abs{\nabla u}^2 \leq C(L) e^{-2 m_1L} \int_{W_{m_1-1}} \abs{\nabla u}^2.
	\label{eqn:optimal3}
\end{equation}
By the mean value theorem, there is $t_i\in [(i-1)L,iL]$ such that
\begin{equation*}
	\int_{W_i} \abs{\nabla u}^2 = L\cdot \int_{\set{t_i}\times S^1} \abs{\nabla u}^2.
\end{equation*}
Hence, finally, the proof of Lemma \ref{lem:mild} is reduced to proving
\begin{equation}
	\int_{\set{t_2}\times S^1} \abs{\nabla u}^2 \leq C(L) e^{-2(t_{m_1-1}-t_2)} \int_{\set{t_{m_1-1}}\times S^1} \abs{\nabla u}^2.
	\label{eqn:final}
\end{equation}
(A1-A3) above implies that $u|_{[t_2,t_{m_1-1}]\times S^1}$ satsfies the assumption of the following proposition with $C_2(L)=(C_1(L)\cdot L)^{1/2}$.

\begin{prop}\label{prop:sharp} There is some $\varepsilon_1(N)>0$. Assume that $u$ is a harmonic map defined on $[0,T]\times S^1$ satisfying
	\begin{equation}\label{eqn:DT1}
		\sup_{\set{t}\times S^1} \abs{\nabla u} + \abs{\nabla^2 u} \leq \varepsilon_1^{1/2},\qquad \forall t\in [0,T]
	\end{equation}
	and
	\begin{equation}\label{eqn:poho1}
		\int_{\set{t}\times S^1} \abs{\partial_\theta u}^2 - \abs{\partial_t u}^2 =0,\qquad \forall t\in [0,T].
\end{equation}
	If
	\begin{equation*}
		\left(\int_{\set{0}\times S^1} \abs{\nabla u}^2\right)^{1/2} = a \quad \text{and} \quad \left(\int_{\set{T}\times S^1} \abs{\nabla u}^2\right)^{1/2} = b
	\end{equation*}
	and
	\begin{equation}
		\label{eqn:difficult}
		\sup_{[0,1]\times S^1} \abs{\partial_\theta^2 u}\leq C_2(L) a< \varepsilon_1^{1/2}, \quad  \sup_{[T-1,T]\times S^1} \abs{\partial_\theta^2 u}\leq C_2(L) b \leq \varepsilon_1^{1/2},
	\end{equation}
	then for any $t\in [0,T]$,
	\begin{equation}
		\left( \int_{ \set{t}\times S^1} \abs{\nabla u}^2\right)^{1/2} \leq 2 E_1 e^{- t} + 2 E_2 e^{t}
		\label{eqn:sharp}
	\end{equation}
	where
	\begin{equation*}
		E_1=\frac{ae^{2T}-be^T}{e^{2T}-1} \quad \text{and} \quad E_2=\frac{be^T-a}{e^{2T}-1}.
	\end{equation*}
\end{prop}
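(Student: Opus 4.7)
The plan is to reduce the estimate to a sharp ODE comparison via separation of variables in the $\theta$-variable. A direct calculation shows that $2E_1 e^{-t}+2E_2 e^{t}=2[a\sinh(T-t)+b\sinh(t)]/\sinh(T)$, which is exactly twice the unique solution $g(t)$ of the boundary-value problem $g''=g$ on $[0,T]$ with $g(0)=a$ and $g(T)=b$. Thus it suffices to show that $\varphi(t):=\left(\int_{\{t\}\times S^1}\abs{\nabla u}^2\right)^{1/2}$ satisfies $\varphi(t)\le 2g(t)$.

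Expand $u(t,\theta)=\sum_{k\in\mathbb{Z}} a_k(t)\,e^{ik\theta}$. The Pohozaev identity \eqref{eqn:poho1}, integrated on $\{t\}\times S^1$ and translated into Fourier coefficients, gives $\sum_k\abs{a_k'(t)}^2=\sum_k k^2\abs{a_k(t)}^2$, so that $\varphi(t)^2=4\pi\sum_{k\ne 0}k^2\abs{a_k(t)}^2$ (the zero-mode derivative $\abs{a_0'}^2$ is reabsorbed into the non-zero modes by the same identity). The harmonic map equation $\Delta u=A(u)(\nabla u,\nabla u)$ becomes the coupled system $a_k''-k^2 a_k=Q_k$, where $Q_k$ is the $k$-th Fourier coefficient of the quadratic nonlinearity. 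For each $k\ne 0$, represent $a_k$ via the Dirichlet Green's function of $\partial_t^2-k^2$ on $[0,T]$:
\[
a_k(t)=\frac{\sinh(k(T-t))}{\sinh(kT)}\,a_k(0)+\frac{\sinh(kt)}{\sinh(kT)}\,a_k(T)+\int_0^T G_k(t,s)\,Q_k(s)\,ds.
\]

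The key monotonicity $\sinh(k(T-t))/\sinh(kT)\le\sinh(T-t)/\sinh(T)$ for every $k\ge 1$ and $t\in[0,T]$ (and the analogue for $\sinh(kt)/\sinh(kT)$) gives a uniform bound across modes. Squaring, multiplying by $k^2$, summing, and using Pohozaev at the endpoints to identify $\sum_{k\ne 0}k^2\abs{a_k(0)}^2=a^2/(4\pi)$ and similarly at $T$, the linear part of $\varphi(t)^2$ is controlled by $2(\sinh(T-t)/\sinh(T))^2a^2+2(\sinh(t)/\sinh(T))^2b^2$, yielding $\varphi(t)\le\sqrt{2}\,g(t)$ in the free case. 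The margin between $\sqrt{2}$ and $2$ is precisely the slack to be used for the quadratic nonlinearity.

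To close the argument, one estimates the nonlinear correction. From \eqref{eqn:DT1} one has $\abs{Q}\le C\abs{\nabla u}^2\le C\varepsilon_1^{1/2}\abs{\nabla u}$, hence $\abs{Q_k(s)}\le C\varepsilon_1^{1/2}\varphi(s)$; the Green's-function integral against this, summed in $k$, produces a correction of order $\varepsilon_1^{1/2}\sup_s\varphi(s)$. A bootstrap, plugging the provisional bound $\varphi\le 2g$ back in, shows that for $\varepsilon_1$ small this correction is absorbed into $(2-\sqrt{2})\,g(t)$. The Hessian bound \eqref{eqn:difficult} provides the pointwise control on $\abs{\nabla^2 u}$ near the two ends needed in these nonlinear estimates and in the elliptic/interpolation passages between $L^2$ and $L^\infty$ norms on each circle. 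The main obstacle is twofold: preserving the exact exponential rate $1$ without slipping into $e^{-(1-\delta)t}$ from naive perturbation, and handling the $k=0$ mode, which has no intrinsic exponential decay and is controlled only indirectly through Pohozaev, $\abs{a_0'(t)}^2=\sum_{k\ne 0}(k^2\abs{a_k}^2-\abs{a_k'}^2)$, placing its contribution under the non-zero modes already estimated. The delicate point is synthesizing the Green's-function monotonicity, the Pohozaev balance, and the nonlinear absorption so that the constant in the final bound is kept exactly equal to $2$.
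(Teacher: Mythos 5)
Your proposal takes a genuinely different route from the paper. The paper works with a scalar quantity $\gamma(t)=\left(\frac{1}{2}\int_{\{t\}\times S^1}|\nabla u|^2\right)^{1/2}$, derives the second-order differential inequality $\gamma''\geq \gamma - c\,\varepsilon_1^{1/2}\gamma$ from the Lin--Wang computation, obtains a first (coarse) ODE comparison $\gamma\leq h$ with $h''=\frac{9}{10}h$, and then upgrades it by replacing the factor $\varepsilon_1^{1/2}\gamma$ by a genuinely quadratic term bounded by a function $H$ satisfying $H''=\frac{9}{5}H$ (so decaying at twice the rate). It is precisely this off-resonance of the quadratic correction which lets the final comparison recover the exact rate $1$ with constant $2$. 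You instead propose a Fourier-mode decomposition and Green's-function representation. Your treatment of the linear part is fine: the monotonicity $\sinh(k(T-t))/\sinh(kT)\leq\sinh(T-t)/\sinh(T)$ for $k\geq1$ is correct and does give $\varphi\leq\sqrt{2}\,g$ in the source-free case.

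However, there is a genuine gap in the nonlinear step, and it is the same resonance obstacle you yourself flag but do not resolve. You estimate $|Q|\leq C|\nabla u|^2\leq C\varepsilon_1^{1/2}|\nabla u|$ and conclude $|Q_k(s)|\leq C\varepsilon_1^{1/2}\varphi(s)$, i.e.\ you deliberately trade one factor of $|\nabla u|$ for $\varepsilon_1^{1/2}$, so the source now decays only at rate $1$ (like $g(s)$). The $k=1$ Green's function of $\partial_t^2-1$ is then exactly resonant with this rate: for instance
\begin{equation*}
	\int_0^T e^{-|t-s|}\,e^{-s}\,ds = t\,e^{-t}+\tfrac{1}{2}\bigl(e^{-t}-e^{2t-2T}\bigr),
\end{equation*}
which is not $O(g(t))$ uniformly in $T$ but rather $O\bigl(t\,g(t)\bigr)$. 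Thus the correction you call ``of order $\varepsilon_1^{1/2}\sup_s\varphi(s)$'' cannot be absorbed into $(2-\sqrt{2})g(t)$ for all $t\in[0,T]$ once $T$ is large, and the bootstrap as stated does not close with the constant $2$. The repair is exactly what the paper's $H$-comparison encodes: you must retain \emph{both} factors of $|\nabla u|$ in $Q$, use elliptic/interior estimates (and the boundary hypothesis \eqref{eqn:difficult} on $[0,1]$ and $[T-1,T]$ where interior estimates fail) to convert the pointwise $|\nabla u|(s,\cdot)$ into a bound by the coarse comparison function, and thereby obtain a source that decays at rate $2$. Against a rate-$2$ source every $G_k$ with $k\geq1$ is off-resonance, yielding a correction genuinely of size $\varepsilon_1^{1/2}g(t)$, which can be absorbed. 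In addition, the per-mode bound $|Q_k|\lesssim\varepsilon_1^{1/2}\varphi(s)$ is too crude for the subsequent $\ell^2$ summation in $k$: you need an $L^2_\theta$ (hence $\ell^2_k$) bound on $Q(\cdot,s)$, not a uniform-in-$k$ bound, or else $\sum_k k^2|\int G_kQ_k|^2$ cannot be controlled. Both points need to be made explicit before the Fourier approach actually delivers \eqref{eqn:sharp}.
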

\begin{rem}
Notice that the right hand side of \eqref{eqn:sharp} is the solution of the ODE
\begin{equation*}
	g''=g \qquad \text{with} \quad g(0)=2a, \quad g(T)=2b.
\end{equation*}
\end{rem}

\begin{proof}
	Due to \eqref{eqn:poho1}, we set
	\begin{equation*}
		f(t)= \frac{1}{2} \int_{\set{t}\times S^1} \abs{\nabla u}^2 = \int_{\set{t}\times S^1} \abs{\partial_\theta u}^2.
	\end{equation*}
	A computation (following the Lemma 2.1 of \cite{LW1998}) yields
	\begin{equation}
		\begin{split}
			f''(t) &= 2 \int_{\set{t}\times S^1} (\partial_\theta \partial_t^2 u, \partial_\theta u) + 2 ( \partial^2_{t \theta} u, \partial^2_{t \theta} u) \\
	&= 2\int_{\set{t}\times S^1} \abs{\partial^2_{t \theta} u}^2 + 2 \int_{\set{t}\times S^1} \abs{\partial_\theta^2 u}^2 - 2\int_{\set{t}\times S^1} (\partial_\theta^2 u, A(u)(\nabla u,\nabla u)).
		\end{split}
		\label{eqn:linwang}
	\end{equation}
	By setting $\gamma= f^{1/2}$ (see \cite{chen2019isolated}), we have
	\begin{equation*}
		2\gamma\gamma'= f' =2\int_{\set{t}\times S^1} (\partial_\theta u, \partial^2_{t\theta}u) \leq 2 \gamma \left( \int_{\set{t}\times S^1} \abs{\partial^2_{t\theta} u}^2 \right)^{1/2},
	\end{equation*}
	which implies that
	\begin{equation}\label{eqn:gammap}
		(\gamma')^2 \leq \int_{S^1} \abs{\partial^2_{t\theta} u}^2.
	\end{equation}
	Together with
	\[
		f''(t)= 2\gamma\gamma''+ 2(\gamma')^2,
	\]
	\eqref{eqn:linwang} and \eqref{eqn:gammap} imply that
	\begin{equation}\label{eqn:newlinwang}
		2\gamma \gamma'' \geq 2\int_{\set{t}\times S^1} \abs{\partial^2_\theta u}^2 - c_1 \int_{\set{t}\times S^1} \abs{\partial^2_\theta u} \abs{\nabla u}^2.
	\end{equation}

	By the Poincar\'e inequality
	\[
		\int_{\set{t}\times S^1} \abs{\partial_\theta^2 u}^2 \geq \int_{\set{t}\times S^1} \abs{\partial_\theta u}^2 = \gamma^2
	\]
	and \eqref{eqn:DT1}, we obtain from \eqref{eqn:newlinwang} that
\begin{equation*}
	\gamma''\geq \gamma - c_2 \varepsilon_1^{1/2} \gamma.
\end{equation*}
We assume that $\varepsilon_1$ is small so that $1-c_2 \varepsilon_1^{1/2}\geq \frac{9}{10}$ so that
\[
	\gamma''\geq \frac{9}{10} \gamma.
\]

Let $h$ be the solution of the ODE
\begin{equation*}
	h''=\frac{9}{10}h, \qquad h(0)=\frac{a}{\sqrt{2}}, \qquad h(T)=\frac{b}{\sqrt{2}}.
\end{equation*}
Then ODE comparison shows
\begin{equation}\label{eqn:gamma1}
	\gamma\leq h.
\end{equation}
This implies some decay of $\gamma$ along the neck. However, the decay rate is not optimal. To improve it, we would like to use \eqref{eqn:newlinwang} again. More precisely, elliptic estimate implies that for any $s\in [1,T-1]$, we have
\begin{equation}
	\max_{\set{s}\times S^1} \abs{\partial^2_\theta u} \leq c_3 \max_{[s-1,s+1]} \gamma \leq c_3 \max_{[s-1,s+1]}h\leq c_3 \tilde{c} h(s).
	\label{eqn:elliptic}
\end{equation}
Here in the last inequality above, we have used \eqref{eqn:universal} of Lemma \ref{lem:app}. 
By \eqref{eqn:universal} of Lemma \ref{lem:app} again and \eqref{eqn:difficult}, we obtain
\begin{equation*}
	\max_{\set{s}\times S^1} \abs{\partial^2_\theta u}\leq \tilde{c} C_2(L) h(s), \qquad \forall s\in [T-1,T].
\end{equation*}
The same inequality holds for $s\in [0,1]$, because of \eqref{eqn:difficult} and \eqref{eqn:toleft} of Lemma \ref{lem:app}.

With the new upper bound of $\abs{\partial_\theta^2 u}$, we derive from \eqref{eqn:newlinwang}
\begin{equation}\label{eqn:compareagain}
	\gamma''\geq \gamma - c_4 C_2(L) h^2 \qquad \forall s \in [0,T].
\end{equation}

Let $H$ be the solution of the ODE
\begin{equation*}
	H''=\frac{9}{5}H, \qquad H(0)=\frac{a^2}{2}, \qquad h(T)=\frac{b^2}{2}.
\end{equation*}
We claim that 
\begin{equation}
	h^2\leq H \qquad \text{on} \quad [0,T]. 
	\label{eqn:gamma2}
\end{equation}
In fact, 
\begin{equation*}
	(h^2)''= 2(h')^2 + \frac{9}{5} h^2 \geq \frac{9}{5}h^2,\qquad h(0)=\frac{a^2}{2},\qquad h(T)=\frac{b^2}{2}.
\end{equation*}
The claim follows from ODE comparison again. 

Combining \eqref{eqn:compareagain} and \eqref{eqn:gamma2}, we obtain 
\begin{equation*}
	\gamma''\geq \gamma - c_4 C_2(L) H\qquad \text{on} \qquad [0,T].
\end{equation*}
Hence, if $c_5=\frac{5}{4}c_4$, we have
\begin{equation*}
	(\gamma+ c_5 C_2(L) H)''\geq (\gamma+ c_5 C_2(L) H)\qquad \text{on} \qquad [0,T].
\end{equation*}
Moreover, the assumption \eqref{eqn:difficult} implies that 
\begin{eqnarray*}
	(\gamma+ c_5 C_2(L) H)(0) &\leq& (h + c_5 C_2(L)H)(0) \\
	&=& (\frac{a}{\sqrt{2}}+ c_5 C_2(L) \frac{a^2}{2})\\
	&\leq& \sqrt{2} a,
\end{eqnarray*}
if we require $\varepsilon_1^{1/2} c_5$ to be small.
Similarly,
\begin{eqnarray*}
	(\gamma+c_5 C_2(L)H)(T) &\leq& \sqrt{2}b. 
\end{eqnarray*}
ODE comparison again gives that 
\begin{equation*}
	\gamma\leq \gamma+c_6H \leq \frac{g}{\sqrt{2}} \qquad \text{on} \qquad [0,T]
\end{equation*}
where $g$ is the solution to
\begin{equation*}
	g''=g,\qquad g(0)=2{a},\qquad g(T)=2{b}.
\end{equation*}
\end{proof}

With Proposition \ref{prop:sharp}, we are now ready to finish the proof of Lemma \ref{lem:mild}. The growth condition \eqref{eqn:growth} implies that
\begin{equation*}
	\int_{\set{t_4}\times S^1} \abs{\nabla u}^2 \geq \int_{\set{t_2}\times S^1} \abs{\nabla u}^2.
\end{equation*}
Setting $L'=(t_4-t_2)$ and noticing that $L'\in (L,3L)$, we can derive \eqref{eqn:final} (hence finish the proof of Lemma \ref{lem:mild}) from the following corollary.

\begin{cor}
	\label{cor:sharp} Assume that $u$ satisfies all assumptions of Proposition \ref{prop:sharp} with $L\geq 2$ and $T>4L$. If
	\begin{equation}\label{eqn:upward}
		\int_{\set{L'}\times S^1} \abs{\nabla u}^2 \geq \int_{\set{0}\times S^1} \abs{\nabla u}^2
	\end{equation}
	for some $L'\in [L,3L]$, 
	then 
	\begin{equation*}
		\int_{\set{0}\times S^1} \abs{\nabla u}^2 \leq C_3(L) e^{-2T} \int_{\set{T}\times S^1} \abs{\nabla u}^2.
	\end{equation*}
\end{cor}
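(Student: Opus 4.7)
The entire strategy is to apply Proposition \ref{prop:sharp} at the single point $t=L'$ and combine with the assumption \eqref{eqn:upward}. Since \eqref{eqn:upward} says the quantity $(\int_{\{t\}\times S^1}|\nabla u|^2)^{1/2}$, evaluated at $t=L'$, is at least $a$, and Proposition \ref{prop:sharp} bounds it above by the solution $g(t)=2E_1 e^{-t}+2E_2 e^t$ of the two-point boundary value problem $g''=g$, $g(0)=2a$, $g(T)=2b$, I would first rewrite $g$ in the convenient form
\[
g(t)=2a\,\frac{\sinh(T-t)}{\sinh T}+2b\,\frac{\sinh t}{\sinh T},
\]
so that evaluation at $t=L'$ yields
\[
a\;\leq\;2a\,\frac{\sinh(T-L')}{\sinh T}+2b\,\frac{\sinh L'}{\sinh T}.
\]

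The next step is to show that the coefficient $1-2\sinh(T-L')/\sinh T$ on the left, after moving the first term over, is strictly positive and bounded away from zero by a constant depending only on $L$. Using the addition formula
\[
\sinh T=\sinh(T-L')\cosh L'+\cosh(T-L')\sinh L'
\]
and the trivial bound $\coth(T-L')\geq 1$ (valid since $T-L'>0$), I get
\[
\frac{\sinh(T-L')}{\sinh T}\;\leq\;\frac{1}{\cosh L'+\sinh L'}\;=\;e^{-L'}\;\leq\;e^{-L}\;\leq\;e^{-2}.
\]
Hence $1-2\sinh(T-L')/\sinh T\geq 1-2e^{-2}>0$, and likewise $\sinh L'/\sinh T\leq 2e^{L'-T}\leq 2e^{3L-T}$ using $L'\leq 3L$ and $T$ sufficiently large (which is guaranteed by $T>4L$).

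Rearranging, I obtain
\[
a\;\leq\;\frac{4\,e^{3L}}{1-2e^{-2}}\,e^{-T}\,b,
\]
and squaring this yields $\int_{\{0\}\times S^1}|\nabla u|^2\leq C_3(L)e^{-2T}\int_{\{T\}\times S^1}|\nabla u|^2$ with $C_3(L)$ depending only on $L$, which is \eqref{eqn:final} as required. The only delicate point in this argument is making sure the coefficient $1-2\sinh(T-L')/\sinh T$ does not degenerate, and I expect this to be the main (though mild) obstacle; it is here that the hypothesis $L\geq 2$ is used in an essential way, providing the numerical slack $2e^{-2}<1$.
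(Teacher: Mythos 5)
Your proof is correct and follows essentially the same route as the paper's: both apply Proposition~\ref{prop:sharp} at $t=L'$, combine with \eqref{eqn:upward} to get $a\leq g(L')$, move the $a$-terms to the left, and show the resulting coefficient of $a$ is bounded away from zero (using $L\geq 2$) while the coefficient of $b$ is $O(e^{-T})$. The only difference is cosmetic: you rewrite $g$ in the $\sinh$-form and use the addition formula and $\coth\geq 1$ to get clean bounds, whereas the paper manipulates the exponentials directly; your version is tidier and in fact makes it clearer that $L\geq 2$ suffices, whereas the paper's proof casually invokes $L>\log 100$, which appears to be a leftover discrepancy from the corollary's stated hypothesis $L\geq 2$. (One small slip: your closing reference to ``\eqref{eqn:final}'' is not what the corollary asserts; that label belongs to the reduction in the proof of Lemma~\ref{lem:mild}. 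The conclusion you actually reach is the displayed inequality of Corollary~\ref{cor:sharp}, which is what is wanted.)
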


\begin{proof}
	Proposition \ref{prop:sharp} and \eqref{eqn:upward} imply that
	\begin{equation*}
		a\leq 2 \frac{ae^{2T}-be^T}{e^{2T}-1} e^{-L'} + 2 \frac{be^T-a}{e^{2T}-1} e^{L'}.
	\end{equation*}
	Hence,
	\begin{equation*}
		b \frac{e^{T+L'}-e^{T-L'}}{e^{2T}-1} \geq a \left( \frac{1}{2} -\frac{e^{2T-L'}- e^{L'}}{e^{2T}-1} \right).
	\end{equation*}
	Recalling that $L'\in (L,3L)$ and that $L>\log 100$, we get
	\begin{equation*}
		e^{3L} e^{-T} b \frac{1-e^{-4L}}{1-e^{-2T}} \geq a \left( \frac{1}{2} - e^{-L}\frac{1- e^{-2T+2L}}{1-e^{-2T}} \right).
	\end{equation*}
	The proof is done by taking $C_3(L)=16 e^{6L}$ and noticing that $T>8$.
\end{proof}

\section{Proof of the main theorem}\label{sec:proof}
The proof of Theorem \ref{thm:main} consists of three steps. First, by studying the relation between $\tilde{g}_i$ and $\bar{g}_i$, we give an equivalent form of the main theorem. Then, we prove a weak decay estimate by using the three circle lemma (Lemma \ref{lem:3c}) and its generalization (Lemma \ref{lem:4d}). Finally, we use the results in Section \ref{sub:optimal} to improve the weak decay into a sharp one, which is exactly our main theorem.

\subsection{An equivalent form of the main theorem}\label{sub:equivalent}
As before,
\begin{equation*}
	\Omega= B(0,\delta)\setminus \bigcup_{j=1,\cdots,l} B(x_i^{(j)},\delta^{-1}\lambda_i^{(j)}).
\end{equation*}
Recall that we have defined two metrics $\bar{g}_i$ and $\tilde{g}_i$ on $\Omega$. The following lemma compares them.
\begin{lem}
	\label{lem:gmgc} There is a constant $C>0$ such that for any $z\in \Omega$,
	\begin{equation*}
		\frac{1}{C}\leq \frac{\bar{g}_i}{ e^{2d(z)} \tilde{g}_i} \leq C,
	\end{equation*}
	where $d(z)$ is the distance from $z$ to $\partial \Omega$ with respect to $\bar{g}_i$.
\end{lem}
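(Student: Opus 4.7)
The plan is to verify the comparison piece by piece using the recursive decomposition of $\Omega$ from Section~\ref{sec:gneck}. Recall that $\Omega$ is a disjoint union of finitely many \emph{simple neck pieces}---annuli $B(c,R_{\mathrm{out}})\setminus B(c,R_{\mathrm{in}})$ on which $w(z)=\abs{z-c}^{-2}$ by \eqref{eqn:ww1}--\eqref{eqn:ww2} and (W2)--(W3)---and finitely many \emph{ghost bubble pieces} of scale $\sigma$, on which $w$ is comparable to $\sigma^{-2}$ by (W1). It is convenient to abbreviate $\tilde w(z):=1+\sum_{j=1}^l (\lambda_i^{(j)})^2/\abs{z-x_i^{(j)}}^4$, so that $\tilde g_i=\tilde w\,dz\wedge d\bar z$, and the goal becomes to show that $w(z)$ is comparable to $e^{2d(z)}\tilde w(z)$ uniformly on each piece.

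On a simple neck piece, I would set $r=\abs{z-c}$ and split the real bubbles $x_i^{(j)}$ into those ``below'' this neck in the tree (for which $\abs{x_i^{(j)}-c}$ is at most a constant times $R_{\mathrm{in}}$, so $\abs{z-x_i^{(j)}}$ is of order $r$) and those in other subtrees (for which $\abs{z-x_i^{(j)}}$ is much larger than $R_{\mathrm{out}}$ and hence contributes negligibly to $\tilde w$). Writing $\Lambda^2:=\sum_{j\text{ below}}(\lambda_i^{(j)})^2$, this yields $\tilde w(z)\approx 1+\Lambda^2/r^4$. The distance $d(z)$, up to an additive $O(1)$ from the bounded $\bar g_i$-diameter of adjacent ghost bubble pieces, equals the minimum of an outward distance $\log(R_{\mathrm{out}}/r)+D_{\mathrm{out}}$ and an inward distance $\log(r/R_{\mathrm{in}})+D_{\mathrm{in}}$. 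Here $D_{\mathrm{out}}$ and $D_{\mathrm{in}}$ are explicit sums of logarithms of radius ratios accumulated by the simple neck pieces strictly outside or strictly inside of this one.

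The core computation is then to verify $w\approx e^{2d(z)}\tilde w$ pointwise on each simple neck. In the outer regime $r^2\geq \Lambda$, $\tilde w$ is comparable to $1$ and the outward path realizes $d(z)$, yielding a ratio of order $R_{\mathrm{out}}^{-2}e^{-2D_{\mathrm{out}}}$. In the inner regime $r^2\leq \Lambda$, $\tilde w$ is comparable to $\Lambda^2/r^4$ and the inward path realizes $d(z)$, yielding a ratio of order $R_{\mathrm{in}}^2\Lambda^{-2}e^{-2D_{\mathrm{in}}}$. The coincidence that the threshold $r\approx \Lambda^{1/2}$ for the regime change of $\tilde w$ agrees with the threshold at which $d(z)$ switches between its outward and inward minimizers is the key point of the proof; it reflects the self-similar cylinder geometry of $\bar g_i$. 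The verification on a ghost bubble piece is analogous: either $\tilde w\approx 1$ and the comparison reduces to $\sigma^{-2}\approx e^{2\log(1/\sigma)}$ along the shorter exit, or a local bubble term dominates $\tilde w$ and the bound follows by matching with the adjacent inner simple neck, where the above simple-neck analysis already applies.

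The main obstacle is bookkeeping: one has to verify that the additive constants $D_{\mathrm{out}}$, $D_{\mathrm{in}}$ produced on adjacent pieces compose consistently, so that the overall comparison constant is uniform on $\Omega$. This is handled by a straightforward induction on the depth of the bubble tree, using the matching conditions (W2)--(W3) at the ghost bubble junctions.
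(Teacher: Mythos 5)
Your framework---split $\Omega$ into simple neck and ghost bubble pieces, compare $w(z)$, $\tilde w(z)$ and $e^{-2d(z)}$ piece by piece, and reduce the ghost bubble case to an adjacent neck boundary---is the same as the paper's. But the simple-neck step has a genuine gap: you discard the contribution to $\tilde w$ from bubbles in other subtrees, on the grounds that $\abs{z-x_i^{(j)}}\gg R_{\mathrm{out}}$ makes $(\lambda_i^{(j)})^2/\abs{z-x_i^{(j)}}^4$ negligible. This fails uniformly in $i$. If your neck lies below a ghost bubble of scale $\sigma$ that also carries a sibling bubble $j_0$ of scale $\lambda_{j_0}$ in another branch, then for $z$ in the neck $\abs{z-x_i^{(j_0)}}\sim\sigma$, so the discarded term is of size $\lambda_{j_0}^2/\sigma^4$. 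The construction only guarantees $\lambda_{j_0}\lesssim\delta^2\sigma$, so this term can be as large as $\delta^4/\sigma^2$, which diverges as $\sigma\to 0$. In such a configuration your approximation $\tilde w\approx 1+\Lambda^2/r^4$ underestimates $\tilde w$ by an unbounded factor, the ``outer regime $r^2\geq\Lambda$, $\tilde w\approx 1$'' step is wrong, and the claimed two-sided bound on $w/(e^{2d}\tilde w)$ degenerates.

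The paper avoids aggregation entirely. After reducing to the simple-neck case and writing
\begin{equation*}
\frac{\omega(z)}{w(z)}=\abs{z-c_i}^2+\sum_{j=1}^l\frac{(\lambda_i^{(j)})^2\,\abs{z-c_i}^2}{\abs{z-x_i^{(j)}}^4},
\end{equation*}
it shows that each of the $l+1$ summands is comparable to $e^{-2d_j(z)}$, where $d_j(z)$ is the $\bar g_i$-distance from $z$ to the $j$-th component of $\partial\Omega$. The sibling / other-subtree bubbles are precisely Case~B there, for which the telescoping computation \eqref{eqn:case2B} gives $d_j = -\log\bigl((\sigma_i')^2/(\lambda_i^{(j)}\abs{z-c_i})\bigr)+O(1)$, matching the summand $(\lambda_i^{(j)})^2\abs{z-c_i}^2/(\sigma_i')^4$. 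Since there are only $l+1$ terms, the sum is comparable to the largest one, which equals $e^{-2\min_j d_j}=e^{-2d(z)}$. The ``coincidence'' of thresholds you invoke is thus a corollary of this term-by-term identity, not something that can be verified from the aggregated $\Lambda$ and $D_{\mathrm{out}},D_{\mathrm{in}}$ alone. To repair your argument you would need to keep the $l+1$ contributions separate and match each against its boundary component---at which point you have reproduced the paper's proof.
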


Before the proof, we notice that it implies that the following theorem is equivalent to Theorem \ref{thm:main}.
\begin{thm}
	\label{thm:main1p}
	Suppose that $u_i$ is a sequence of harmonic maps satisfying (U1) and (U2) and that $\Omega_i$ is a generalized neck domain, then there is some constant $C$ such that
	\begin{equation}
		\label{eqn:main1p}
		\abs{\nabla u_i}_{\bar{g}_i} \leq C e^{-d(z)} \qquad \text{on} \quad \Omega_i.
	\end{equation}
\end{thm}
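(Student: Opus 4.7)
The plan is to prove Theorem \ref{thm:main1p} in three stages, exploiting the decomposition of $\Omega_i$ (with respect to $\bar{g}_i$) into simple neck domains---long flat cylinders of circumference $2\pi$---and ghost bubble domains---compact multi-way connectors of uniformly bounded geometry, as constructed in Section \ref{sub:gnd}. In this metric $\Omega_i$ is a finite graph whose edges are cylinders $[0,T_e]\times S^1$ of large modulus $T_e$ and whose internal vertices are ghost bubble domains; the boundary $\partial \Omega_i$ consists of the free ends of the pendant cylinders, and $d(z)$ is the $\bar{g}_i$-distance to this boundary. After choosing $\delta$ and $\eta$ small (using Lemma \ref{lem:4d}), I may arrange that (O3), the smallness assumptions (S1)--(S3), \eqref{eqn:DT3c}, and \eqref{eqn:smalle} all hold on every ghost bubble vertex and every unit piece of every cylinder edge. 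The Pohozaev identity \eqref{eqn:poho} holds automatically on any cylindrical subdomain because the Hopf differential of a harmonic map is a holomorphic quadratic differential on $\Omega_i$.

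The first substantive step is a \emph{weak} exponential decay of energy. For $z\in \Omega_i$ with $d:=d(z)$ large, I would combine Lemma \ref{lem:3c} along cylinder edges with Lemma \ref{lem:4d} at ghost bubble vertices. Together these two lemmas say that the local energy on any ``middle'' piece of the graph is bounded by $\beta$ times the sum of the energies on the two (or more) adjacent pieces, with $\beta$ a small fixed number. Iterating this convexity-type inequality along the shortest $\bar{g}_i$-path from $z$ to $\partial \Omega_i$ produces
\[
\int_{U_z} \abs{\nabla u_i}^2 \le C \beta^{d/L} \cdot E_{\mathrm{total}},
\]
where $U_z$ is a $\bar{g}_i$-unit ball around $z$. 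Combined with $\varepsilon$-regularity on $U_z$, this yields the weak pointwise bound $\abs{\nabla u_i}_{\bar{g}_i}(z) \le C e^{-\alpha d(z)}$ for some $\alpha=\alpha(\beta,L)>0$ that is not necessarily $1$.

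The final step is to upgrade the decay rate to the sharp value $\alpha=1$. On each long cylinder edge I would apply Proposition \ref{prop:sharp}, whose hypotheses \eqref{eqn:DT1} and \eqref{eqn:difficult} at the two ends are now secured by the weak decay of the previous stage together with interior elliptic estimates on neighboring pieces; the conclusion \eqref{eqn:sharp} is the sharp profile $e^{-t}+e^{-(T_e-t)}$ along each edge. To chain these estimates across a ghost bubble vertex I would use the uniformly bounded geometry of the vertex together with the uniform smallness of the local energy: elliptic regularity there shows that $u_i$ is close to a constant with $C^k$-norm proportional to the local energy, so the boundary values of $\abs{\nabla u_i}_{\bar{g}_i}$ on the several incoming cylinder ends are comparable up to a multiplicative constant. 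Concatenating the sharp decay profiles across all edges and vertices of the graph then produces \eqref{eqn:main1p}.

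The main obstacle is this last point: transferring the sharp rate across a ghost bubble vertex. Proposition \ref{prop:sharp} by itself gives $e^{-t}$ decay relative to boundary data on a \emph{single} cylinder, but a vertex has several incoming cylinders and no a priori common ``zero'' from which to decay. One has to verify that the energy densities at the incoming ends are comparable up to constants depending only on $N$ and the bounded geometry of the vertex. This comparison is precisely where Lemma \ref{lem:4d} enters: applied with each of the incoming cylinders playing the role of the outer annulus $\Omega_3\setminus \Omega_2$, it bounds each incoming-end energy by (a constant times) the sum of all incoming-end energies, which is exactly what is needed to propagate the sharp exponent through the vertex without loss.
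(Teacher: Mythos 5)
Your high-level strategy matches the paper's: decompose $\Omega_i$ (in $\bar{g}_i$) into unit cylinder pieces and ghost bubble vertices, establish a weak exponential decay by iterating convexity estimates, and then upgrade to the sharp rate via the ODE argument. You have also cited the correct lemmas. However, the crucial organizational device is missing, and this is a genuine gap rather than a cosmetic one. You cannot "iterate the convexity-type inequality along the shortest $\bar{g}_i$-path." The three-circle lemma and its generalization tell you that the local energy is dominated by adjacent energies, but they do not tell you \emph{in which direction} the energy increases; the iteration must follow the direction of increase, which is discovered step by step. At each ghost bubble vertex, the choice $\beta = \frac{1}{2 m_2}$ in Lemma \ref{lem:4d} forces at least one outgoing branch to carry energy at least twice that of the vertex region, and the path continues along \emph{that} branch. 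This energy-increasing path need not coincide with the shortest path, but because it has length at least $d(z)$ the final estimate is still in terms of $d(z)$. The paper packages this construction as Lemma \ref{lem:rough}: a path $W_1,\ldots,W_s$ from the piece containing $z$ to $\partial\Omega_i$ on which $E(W_{k-1}) \le \frac{1}{2} E(W_k)$ except at a bounded number of indices ($\le 4m_1$). That the exceptional count is bounded independently of $i$ is precisely what keeps the sharp rate from degenerating, and your proposal does not address it.

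Two further points are incorrect as stated. First, you claim the energy densities at the several cylinder ends meeting a ghost bubble vertex are "comparable up to constants" and that Lemma \ref{lem:4d} supplies this. It does not, and the claim is false in general: one branch may carry essentially all the energy. What the argument actually needs and gets is one-sided: the energy at the end by which you entered is bounded above by a constant times the energy at the (maximal) end by which you exit. Second, hypothesis \eqref{eqn:difficult} of Proposition \ref{prop:sharp} is not "secured by the weak decay plus interior elliptic estimates." That hypothesis compares the $C^2$ norm at an endpoint with the $L^2$ norm \emph{at that same circle}, whereas elliptic estimates control the $C^2$ norm by the $L^2$ norm on the adjacent unit annulus, which could a priori be far larger. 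The paper handles this with the auxiliary reductions (A1)–(A3) in the proof of Lemma \ref{lem:mild}, justified by an induction on the cylinder length; this is why the paper applies Lemma \ref{lem:mild} (which absorbs those reductions, taking the growth condition from Lemma \ref{lem:rough} as its hypothesis) rather than applying Proposition \ref{prop:sharp} directly to each cylinder edge.
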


By definition(see \eqref{eqn:bargi} and \eqref{eqn:tildegi}), for the proof of Lemma \ref{lem:gmgc}, it suffices to show that there exists $C>0$ such that
\begin{equation}\label{eqn:deal}
	\frac{1}{C}w(z) \le e^{2d(z)} \omega(z) \leq C w(z), \qquad \forall z\in \Omega
\end{equation}
where
\begin{equation}\label{eqn:defomega}
	\omega(z)= 1+ \sum_{j=1\ldots l} \frac{(\lambda_i^{(j)})^2}{ \abs{z- x_i^{(j)}}^4}.
\end{equation}

For the proof, it is essential to understand the meaning of $d(z)$ and the contribution of each term in the sum of \eqref{eqn:defomega}. Figure \ref{fig:2} illustrates an example and it is helpful in understanding the proof that follows. Here $R_1,R_2,R_3$ represent three real bubbles; $G_1,G_2$ two ghost bubbles; for a point $z\in \Omega$, we denote by $p_1,\cdots,p_4$ the four paths from $z$ to the components of $\partial \Omega$. In this case, $d(z)$ is going to be the minimal length of $p_1,\cdots,p_4$.

\begin{figure}[h]
	\centering
	\includegraphics[width=4cm]{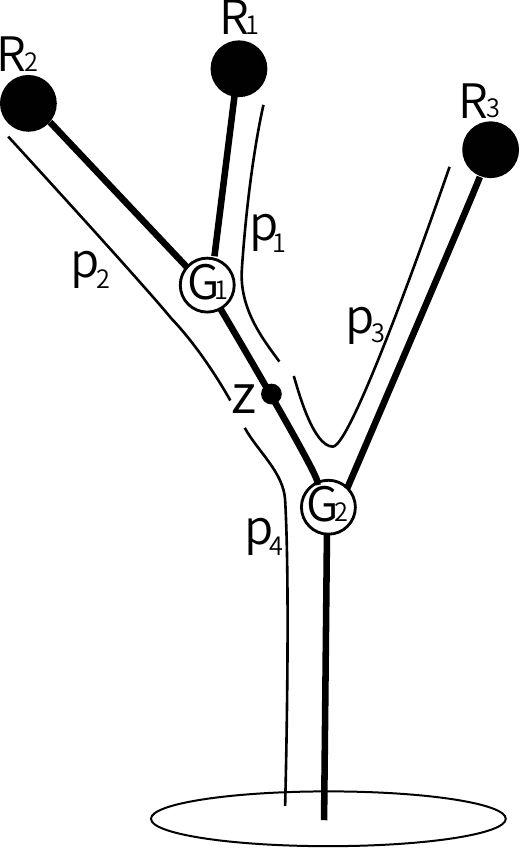}
	\caption{Distance function to the boundary}
	\label{fig:2}
\end{figure}

In the following proof, we write $a_i\sim b_i$ if there is $C>0$ such that $\frac{1}{C}a_i\leq b_i\leq Ca_i$.

\begin{proof}
By taking logarithm, it suffices to show
\begin{equation}
	\abs{-\frac{1}{2}\log \frac{\omega(z)}{w(z)} -d(z)}\leq C\qquad \forall z\in \Omega.
	\label{eqn:deallog}
\end{equation}

The rest of the proof deals with two cases separately: $z$ lies in a ghost bubble domain, or in a simple neck domain.

{\bf Case 1.} Assume that $z$ is in the ghost bubble domain
\[
	B(c_i, 2 \sigma_i) \setminus \bigcup_{y\in \mathcal C} B(y,\delta \sigma_i).
\]
Take any $z'\in \partial B(c_i, 2\sigma_i)$. We claim that
\[
	\abs{\left( -\frac{1}{2}\log \frac{\omega(z')}{w(z')} -d(z') \right)- \left( -\frac{1}{2}\log \frac{\omega(z)}{w(z)} -d(z) \right)}\leq C.
\]
Hence, it suffices to prove \eqref{eqn:deallog} for $z\in \partial B(c_i, 2\sigma_i)$, which is Case 2.
To show the claim, recall (W1) in the definition of $w$ on ghost bubble domain, which implies that
\[
	w(z)\sim w(z') \quad \text{and} \quad \abs{d(z)-d(z')}\leq C.
\]
Next, we study the difference between $\omega(z)$ and $\omega(z')$. For each $j$, the bubble $(x_i^{(j)},\lambda_i^{(j)})$ is either on top of $(c_i,\sigma_i)$, or is separate from $(c_i,\sigma_i)$. In the first case, we have
\begin{equation*}
	\lim_{i\to \infty}\frac{x_i^{(j)}-c_i}{\sigma_i} \in \mathcal C.
\end{equation*}
Hence, for $i$ sufficiently large,
\[
	\abs{z-x_i^{(j)}},\abs{z'-x_i^{(j)}} \in [\delta/2\sigma_i,4\sigma_i],
\]
which implies that 
\begin{equation}
	\label{eqn:similarj}
	\frac{(\lambda_i^{(j)})^2}{\abs{z-x_i^{(j)}}^4} \sim \frac{(\lambda_i^{(j)})^2}{\abs{z'-x_i^{(j)}}^4}.
\end{equation}
In the second case, we have
\begin{equation*}
	\lim_{i\to \infty} \frac{\abs{c_i- x_i^{(j)}}}{\sigma_i} = \infty.
\end{equation*}
Together with the fact that
\[
	\abs{c_i-z}, \abs{c_i-z'}\leq 2\sigma_i,
\]
we obtain \eqref{eqn:similarj} again.
In summary, we have $\omega(z)\sim \omega(z')$ and our claim is proved.

{\bf Case 2.} Assume that $z$ is in a simple neck domain
\[
	B(c_i,\delta \sigma_i) \setminus B(c_i,\lambda_i).
\]
First, we study the distance from $z$ to $\partial B(0,\delta)$ with respect to $\bar{g}_i$. In general, the path from $z$ to $\partial B(0,\delta)$ may pass several (or no) ghost bubble domains. For simplicity, we assume that there is only one ghost bubble domain. This is a situation illustrated by $p_4$ in Figure \ref{fig:2}. In this case, the ghost bubble $G_2$ is represented by a sequence $(x_i,\sigma_i)$ and
\begin{equation*}
	\lim_{i\to \infty}\frac{c_i-x_i}{\sigma_i}
\end{equation*}
is the concentration point at which the real bubbles $R_1$ and $R_2$ hide.
Since the diameter of the ghost bubble domain measured by $\bar{g}_i$ is bounded, we have
\begin{equation}\label{eqn:case2}
	\begin{split}
	d(z,\partial B(0,\delta)) &= d(z, \partial B(c_i,\delta \sigma_i)) + C + d(\partial B(x_i, \sigma_i), \partial B(0,\delta)) \\
	&= \log \frac{\sigma_i}{ \abs{z-c_i}} + \log \frac{1}{\sigma_i} + C\\
	&= -\log \abs{z-c_i} + C.
	\end{split}
\end{equation}

Next, we study the distance from $z$ to $\partial B(x_i^{(j)},\delta^{-1} \lambda_i^{(j)})$ for $j=1,\cdots,l$. There are two possibilities:

{\bf Case A.}  The real bubble $(x_i^{(j)}, \lambda_i^{(j)})$ sits 'on top of' the neck containing $z$, in the sense that 
\begin{equation}
	\lim_{i\to \infty} \frac{x_i^{(j)}-c_i}{ \abs{z-c_i}} < \infty;
	\label{eqn:2A}
\end{equation}

{\bf Case B.} The real bubble is separate from the neck, in the sense that
\begin{equation}
	\lim_{i\to \infty} \frac{x_i^{(j)}-c_i}{ \abs{z-c_i}} = \infty.
	\label{eqn:2B}
\end{equation}

In Figure \ref{fig:2}, the real bubbles $R_1$ and $R_2$ are Case A and the bubble $R_3$ is Case B. For case A, we assume again that the path from $z$ to $\partial B(x_i^{(j)},\lambda_i^{(j)})$ passes only one ghost bubble (see $G_1$ in Figure \ref{fig:2}), $(c_i,\lambda_i)$, so that
\[
	\lim_{i\to \infty} \frac{x_i^{(j)}-c_i}{\lambda_i} 
\]
is where $(x_i^{(j)},\lambda_i^{(j)})$ concentrates. As in Case 1, we have
\begin{equation}
	\begin{split}
		d(z,\partial B(x_i^{(j)}, \delta^{-1} \lambda_i^{(j)})) &= d(z, \partial B(c_i,2\lambda_i)) + C + d(\partial B(x_i^{(j)}, \delta \lambda_i), \partial B(x_i^{(j)},\delta^{-1} \lambda_i^{(j)})) \\
	&= \log \frac{ \abs{z-c_i}}{\lambda_i} + \log \frac{\lambda_i}{\lambda_i^{(j)}} + C\\
	&= -\log \frac{\lambda_i^{(j)}}{\abs{z-c_i}} + C.
	\end{split}
	\label{eqn:case2A}
\end{equation}

For Case B, there is a ghost bubble (see $G_2$ in Figure \ref{fig:2}) $(c'_i,\sigma'_i)$ such that
\[
	\lim_{i\to \infty} \frac{z-c'_i}{\sigma'_i} \ne \lim_{i\to \infty} \frac{x_i^{(j)}-c'_i}{ \sigma'_i}.
\]
This case is illustrated by $p_3$ in Figure \ref{fig:2}. In general, the path from $z$ to $G_2$ and from $G_2$ to $R_3$ may pass more ghost bubble domains. However, the proof remains the same by similar argument above. In this case,
\begin{equation}
	\begin{split}
		d(z,\partial B(x_i^{(j)}, \delta^{-1} \lambda_i^{(j)})) &= d(z, \partial B(c_i,\delta \sigma'_i)) + C + d(\partial B(x_i^{(j)}, \delta \sigma'_i), \partial B(x_i^{(j)},\delta^{-1}\lambda_i^{(j)})) \\
	&= \log \frac{\sigma'_i}{ \abs{z-c_i}} + \log \frac{\sigma'_i}{\lambda_i^{(j)}} + C\\
	&= -\log \frac{(\sigma_i')^2}{\lambda_i^{(j)}\cdot \abs{z-c_i}} + C.
	\end{split}
	\label{eqn:case2B}
\end{equation}

We go back to the proof of \eqref{eqn:deallog} by computing
\begin{equation}\label{eqn:dealdone}
	-\frac{1}{2}\log \frac{\omega(z)}{w(z)} =- \frac{1}{2}\log \left( \abs{z-c_i}^2 + \sum_j \frac{(\lambda_i^{(j)})^2}{ \abs{z-x_i^{(j)}}^2} \cdot \frac{\abs{z-c_i}^2}{\abs{z-x_i^{(j)}}^2} \right).
\end{equation}
Here in the parenthesis, it is the sum of $l+1$ positive terms. If we denote the largest one by $M$, we have
\begin{equation*}
	\abs{-\frac{1}{2}\log \frac{\omega(z)}{w(z)} - (-\frac{1}{2}\log M)}\leq \tilde{c}.
\end{equation*}
There are exactly $l+1$ boundary components of $\partial \Omega$. We will show that the minus logarithm of each positive term in the parenthesis is (up to a constant) the distance from $z$ to a boundary component.

First, by \eqref{eqn:case2}, the first term in the parenthesis correpsonds to the distance from $z$ to $\partial B(0,\delta)$.

For $j=1,\cdots,l$, in Case A, we have
\begin{equation*}
	\abs{z-x_i^{(j)}} \sim \abs{z-c_i},
\end{equation*}
which implies that 
\begin{equation*}
	\frac{(\lambda_i^{(j)})^2}{ \abs{z-x_i^{(j)}}^2} \cdot \frac{\abs{z-c_i}^2}{\abs{z-x_i^{(j)}}^2} \sim \frac{(\lambda_i^{(j)})^2}{ \abs{z-c_i}^2}.
\end{equation*}
It is related to the distance from $z$ to $\partial B(x^{(j)}_i, \delta^{-1} \lambda_i^{(j)})$ by \eqref{eqn:case2A}.
In case B, we have
\begin{equation*}
	\abs{z-x_i^{(j)}}\sim \sigma'_i,
\end{equation*}
which implies that
\begin{equation*}
	\frac{(\lambda_i^{(j)})^2}{ \abs{z-x_i^{(j)}}^2} \cdot \frac{\abs{z-c_i}^2}{\abs{z-x_i^{(j)}}^2} \sim \frac{(\lambda_i^{(j)})^2 \abs{z-c_i}^2}{ (\sigma'_i)^4}.
\end{equation*}
It is related to the distance from $z$ to $\partial B(x^{(j)}_i, \delta^{-1} \lambda_i^{(j)})$ by \eqref{eqn:case2B}.
\end{proof}

\subsection{A weak decay estimate}\label{sub:weak}
The aim of this subsection is to prove the following inequality that is weaker than \eqref{eqn:main1p},
\begin{equation}\label{eqn:weaker}
	\abs{\nabla u_i}_{\bar{g}_i}\leq C e^{ -\alpha d(z)} \qquad \text{on} \quad \Omega_i
\end{equation}
for some $\alpha\in (0,1)$.

Let $m_1$ be the total number of ghost bubbles and $m_2$ be the maximal number of boundary components of the ghost bubble domains. Setting $\beta=\frac{1}{2m_2}$, a constant $L$ is determined by Lemma \ref{lem:3c} and Lemma \ref{lem:4d} (with $\eta=e^{-L}$). As $i$ goes to infinity, so do the lengths of simple neck domains. Assume without loss of generality that these lengths are integer multiples of $L$. Then the generalized neck domain becomes the union of many cylinder pieces
\begin{equation*}
	W=[0,L]\times S^1
\end{equation*}
and one ghost bubble piece(domain)
\begin{equation*}
	W=B(x_i,2\sigma_i)\setminus \bigcup_{y\in \mathcal C} B(c_i(y),\delta \sigma_i). 
\end{equation*}
In either case, we write $E(W)$ for the integral $\int_W \abs{\nabla u}_{\bar{g}_i}^2$.

Since the diameter of each piece (w.r.t. $\bar{g}_i$) is bounded, the weak decay estimate \eqref{eqn:weaker} follows from the next lemma. 

\begin{lem}
	\label{lem:rough}For any piece $W$ as above, there are $s$ pieces $W_1,\cdots,W_s$ such that

	(i) $W=W_1$;

	(ii) $W_s$ touches one component of $\partial \Omega$;

	(iii) $\bigcup_{j=1,\cdots,s}W_j$ is connected;

	(iv) The number of index $k$ from $2$ to $s$ {\bf not} satisfying
	\begin{equation*}
		2E(W_{k-1})\leq E(W_k)
	\end{equation*}
	is bounded by $4m_1$;

	(v) For any $j=1,\cdots,s$,
	\begin{equation*}
		E(W_j)\leq C 2^{-j}.
	\end{equation*}
\end{lem}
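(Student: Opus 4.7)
The plan is to build the chain greedily. Set $W_1 = W$, and as long as the current piece $W_k$ does not touch $\partial\Omega$, let $W_{k+1}$ be a neighbor of $W_k$ with the largest energy $E$: among the two neighbors if $W_k$ is a simple cylinder, and among the at most $m_2$ neighbors if $W_k$ is a ghost bubble piece. The chain terminates at the first $W_s$ that touches $\partial\Omega$, which immediately gives (i)--(iii). That the chain never revisits a piece will follow from the strict increase of energies along the chain provided by (iv).

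The verification of (iv) rests on a doubling dichotomy. Take $\beta = 1/(2m_2)$, let $L$ be the length scale provided by Lemma \ref{lem:3c}, and set $\eta = e^{-L}$ as in Lemma \ref{lem:4d}, so that the piece decomposition of the generalized neck domain matches the three-circle geometry. The small-energy assumption $\int_{\text{piece}} \abs{\nabla u_i}^2 \le \varepsilon_1$ needed by both lemmas holds uniformly by (O3), provided we take $\varepsilon_0 \le \varepsilon_1$. Whenever the remaining hypotheses of the appropriate lemma are met at $W_k$, it yields
\[
	E(W_k) \le \beta \sum_{W'\,\text{neighbor of}\,W_k} E(W') \le \beta \cdot m_2 \cdot E(W_{k+1}) = \tfrac{1}{2} E(W_{k+1}),
\]
so that $E(W_{k+1}) \ge 2 E(W_k)$. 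These hypotheses can fail only when $W_k$ sits within a bounded number of pieces of a ghost bubble domain: Lemma \ref{lem:3c} requires three consecutive cylinder pieces, and Lemma \ref{lem:4d} requires every cylinder attached to the ghost bubble to extend at least three pieces beyond it. Each of the $m_1$ ghost bubbles can therefore obstruct the doubling at only a fixed bounded number of indices in the chain; a direct count gives the bound $4m_1$ claimed in (iv).

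Property (v) then follows by unwinding (iv) together with the global energy bound. Since $E$ doubles at all but at most $4m_1$ of the chain steps and the boundary piece satisfies $E(W_s) \le \int_{\Omega_i} \abs{\nabla u_i}^2 < \varepsilon_0$ by (O3), a routine geometric estimate yields $E(W_j) \le 2^{4m_1}\varepsilon_0 \cdot 2^{j-s}$, which is the content of (v) after the natural reindexing. The main obstacle throughout is the behavior near ghost bubble pieces: it is crucial that the constant $C_1$ in Lemma \ref{lem:upperbound} is independent of $\eta$, since this is precisely what allows us to first fix $\beta = 1/(2m_2)$ and then choose $L$ large enough to force the doubling at the ghost bubble pieces themselves via Lemma \ref{lem:4d}.
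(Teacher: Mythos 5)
Your greedy step misreads Lemma \ref{lem:4d}. You claim that at a ghost bubble piece $W_k$ the lemma gives
\[
E(W_k)\le \beta\sum_{W'\text{ neighbor of }W_k}E(W'),
\]
but Lemma \ref{lem:4d} compares $\int_{\Omega_2}$ to $\int_{\Omega_3\setminus\Omega_2}$: the region with small energy is the ghost bubble piece \emph{together with the first two rings of cylinder pieces on every arm} (these are contained in $\Omega_2$), and the large energy lies in the \emph{third} ring. In particular the immediate neighbors of the ghost bubble sit on the small side of the inequality, so there is no reason the neighbor with the largest $E$ points toward where the energy is. Concretely, a ghost bubble can have one arm whose first piece has large energy because that arm is the cap-shaped part of a $\cosh$-type profile that then decays for many pieces, and a different arm whose third piece is large. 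Greedy walks into the decaying arm and, since you also forbid revisits, gets locked onto it; Lemma \ref{lem:3c} gives only convexity of the energy along a cylinder, not monotone doubling, so nothing forces $E(W_{k+1})\ge 2E(W_k)$ until the profile bottoms out, which can take arbitrarily many pieces. Your count of $4m_1$ non-doubling indices therefore has no proof.

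What the paper actually does is different in two ways you are missing. First, it maintains the inductive invariant that the last two pieces in the chain are cylinders with $E(W_j)\ge 2E(W_{j-1})$; this is what makes the three-circle step at the next cylinder actually yield doubling (since the backward neighbor is already known to be small relative to $W_j$), and is what allows it to conclude that the arm singled out by Lemma \ref{lem:4d} cannot be the one just traversed. Second, when it meets a ghost bubble it appends three or four pieces at once: it uses Lemma \ref{lem:4d} to locate the third-ring piece $W''$ on some arm with $\int_{\Omega_2}\le\frac12 E(W'')$, inserts the two intermediate cylinders, and records that these few indices (not one) are the exceptions to doubling. That lookahead is precisely what your one-step greedy rule cannot reproduce, and without it neither (iv) nor the geometric bound (v) follows. (Your final observation about $C_1$ being $\eta$-independent is the right ingredient, but it is used to choose $L$ so that the three-ring estimate holds, not to "force doubling at the ghost bubble pieces themselves".)
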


\begin{figure}[h]
	\centering
	\includegraphics{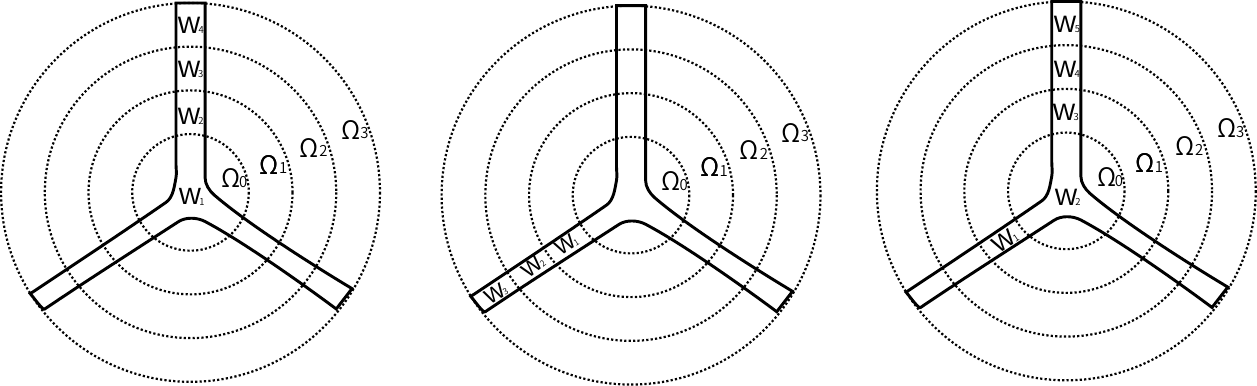}
	\caption{From left to right: Case 1, Case 2A and Case 2B}
	\label{fig:3}
\end{figure}

\begin{proof}
The proof is by induction. Set $W_1=W$.

There are three cases depending on the position of $W$.

Case 1: $W$ is a ghost bubble domain(see Case 1 of Figure \ref{fig:3}). By Lemma \ref{lem:4d},
\begin{equation*}
	\int_{\Omega_2} \abs{\nabla u}^2 \leq \beta \int_{\Omega_3\setminus \Omega_2} \abs{\nabla u}^2.
\end{equation*}
Since $\beta=\frac{1}{2m_2}$, there is a component of $\Omega_3\setminus \Omega_2$, which is a cylinder piece denoted by $W_4$, such that
\begin{equation*}
	\int_{\Omega_2} \abs{\nabla u}^2 \leq \frac{1}{2} E(W_4).
\end{equation*}
Let $W_2$ and $W_3$ be the two pieces connecting $W$ and $W_4$. Notice that $W_3$ is a cylinder and we have $E(W_4)\geq 2 E(W_3)$.

Case 2: If $W$ is a cylinder next to a ghost bubble domain, we use Lemma \ref{lem:4d} as above to get a component $W'$ of $\Omega_3\setminus \Omega_2$ satisfying
\begin{equation*}
	\int_{\Omega_2} \abs{\nabla u}^2 \leq \frac{1}{2} E(W').
\end{equation*}
If $W'$ and $W$ is in the same component of $\Omega_3\setminus \Omega_0$ (see Case 2A of Figure \ref{fig:3}), then we set $W'=W_3$ and let $W_2$ be the piece between $W_3$ and $W_1$. Notice that $W_2$ is a cylinder and that $E(W_3)\geq 2 E(W_2)$.

If $W'$ and $W$ are not in the same component of $\Omega_3\setminus \Omega_0$ (see Case 2B of Figure \ref{fig:3}), then we set $W_2$ to be the ghost bubble domain, $W_5=W'$ and $W_3,W_4$ be the two pieces between $W_2$ and $W_5$.

In this case, it is also true that $W_4$ and $W_5$ are cylinders and $E(W_5)\geq 2 E(W_4)$.

Case 3: $W$ is a cylinder and the two adjacent pieces are also cylinders. By Lemma \ref{lem:3c}, since $\beta\leq \frac{1}{4}$, there is at least one adjacent piece, which we denote by $W_2$ satisfying $E(W_2)\geq 2 E(W_1)$.

Assume that we have found $W_1,\cdots,W_j$ such that

(H1) $W_{k-1}$ is adjacent to $W_k$ for any $k=2,\cdots,j$.

(H2) $W_{j-1}$ and $W_j$ are both cylinders satisfying $E(W_j)\geq 2E(W_{j-1})$;

(H3) the number of index $k$ from $2$ to $j$ not satisfying
\begin{equation*}
	2E(W_{k-1})\leq E(W_k)
\end{equation*}
is bounded by $4m_1$.

If $W_j$ touches the boundary of $\partial \Omega$, then the induction is complete. To conclude the proof, it remains to justify (iv) and (v). Notice that (iv) is just (H3) and (v) follows from (iv) and the fact that for any piece $W'$ touching the boundary, we have $E(W')\leq C\varepsilon_1$.

If $W_j$ does not touch the boundary, we define $W_{j+1}$ as follows.
By (H2), $W_j$ is a cylinder. Hence, there are two adjacent pieces and one of them is $W_{j-1}$. Denote the other by $W'$.

If $W'$ is a cylinder, then by Lemma \ref{lem:3c}, we conclude that $E(W')\geq 2E(W_j)$ and denote $W'$ by $W_{j+1}$. 

\begin{figure}[h]
	\centering
	\includegraphics[width=6cm]{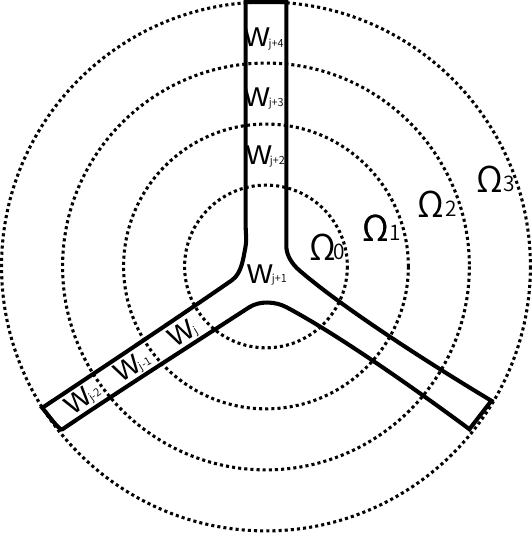}
	\caption{passing the ghost bubble domain}
	\label{fig:4}
\end{figure}

If $W'$ is a ghost bubble domain(see Figure \ref{fig:4}), we choose $W''$ to be the component in $\Omega_3\setminus \Omega_2$ satisfying
\begin{equation}\label{eqn:wpp}
	\int_{\Omega_2} \abs{\nabla u}^2 \leq \frac{1}{2} E(W'').
\end{equation}
Notice that $W''$ and $W_j$ can not be in the same component of $\Omega_3\setminus \Omega_0$, otherwise $W''$ would be $W_{j-2}$, which contradicts \eqref{eqn:wpp}. Then we set $W_{j+1}=W'$, $W_{j+4}=W''$ and let $W_{j+2}, W_{j+3}$ be the two pieces between $W'$ and $W''$.

It is easy to check that the induction hypothesis (H1)-(H3) hold. We repeat the induction construction until the proof is done.
\end{proof}

\subsection{The optimal decay estimate}\label{sub:optimalcay}

The aim of this subsection is to prove \eqref{eqn:main1p}. The proof is based on Lemma \ref{lem:rough} and Lemma \ref{lem:mild}. 

Recall that in the proof of Lemma \ref{lem:rough}, $\Omega$ is decomposed into cylinder pieces of length $L$ and ghost bubble piece. Our first step in the proof of Theorem \ref{thm:main1p} is to show that it suffices to prove \eqref{eqn:main1p} for $z\in \Omega$ satisfying
\[
d(z,W)>15L
\]
for any ghost bubble piece $W$. Here $d(z,W)$ is measured with respect to $\bar{g}_i$.

Assume this is true and let $\tilde{z}$ be any point satisfying $d(\tilde{z},W)\leq 15L$ for some ghost bubble domain $W$. With $W=\Omega_0$ in mind, we recall the definition of $\Omega_1,\Omega_2,\cdots$ in \eqref{eqn:omegaj}. By Lemma \ref{lem:4d} and Lemma \ref{lem:3c}, we have
\begin{equation*}
	E(\Omega_2)\leq \frac{1}{2} E(\Omega_3\setminus \Omega_2) \leq \frac{1}{4} E(\Omega_4\setminus \Omega_3) \leq \cdots.
\end{equation*}
Together with elliptic estimates, the above inequality implies that
\begin{equation*}
	\abs{\nabla u}_{\bar{g}_i}^2(\tilde{z})\leq C \int_{\Omega_{16}}\abs{\nabla u}_{\bar{g}_i}^2 \leq C \int_{\Omega_{17}\setminus \Omega_{16}} \abs{\nabla u}_{\bar{g}_i}^2.
\end{equation*}
By our assumption, for any $z'\in \Omega_{17}\setminus \Omega_{16}$, we have $d(z',W)>15L$ and hence
\begin{equation*}
	\abs{\nabla u}_{\bar{g}_i}^2(\tilde{z})\leq  C \sup_{z'\in \Omega_{17}\setminus \Omega_{16}}e^{-2d(z')} \leq C e^{-2d(\tilde{z})},
\end{equation*}
because $d(\tilde{z},z')\leq C$.

Hence, for the rest of the proof we assume that $d(z,W)>15L$ for any ghost bubble piece. Let $W_z$ be the cylinder piece containing $z$ and $W_-$ and $W_+$ be the two adjacent pieces. By elliptic estimates, we have
\begin{equation*}
	\abs{\nabla u}^2_{\bar{g}_i}(z) \leq C \left( E(W_-)+ E(W_z) + E(W_+) \right). 
\end{equation*}
Therefore, it suffices to show
\begin{equation*}
	E(W)\leq C e^{-2d(W,\partial \Omega)}
\end{equation*}
for any cylinder piece $W$ whose distance to any ghost bubble piece is larger than $12L$.

For this $W$, Lemma \ref{lem:rough} gives a sequence $W_1,\cdots,W_s$. For simplicy, we assume only one of them, say $W_l$, is a ghost bubble piece. The proof of Lemma \ref{lem:rough} shows that
\begin{equation*}
	2E(W_{k-1})\leq E(W_k)
\end{equation*}
for $k=2,3,\cdots,l-1$ and $k=l+3,l+4,\cdots,s$ (see Figure \ref{fig:4}). Hence, we can apply Lemma \ref{lem:mild} to see
\begin{equation*}
	E(W_1)\leq C(L) e^{-2l L} E(W_{l-1})
\end{equation*}
and
\begin{equation*}
	E(W_{l+3})\leq C(L) e^{-2(s-l) L} E(W_{s}).
\end{equation*}
Moreover, we have $E(W_{l+3})\geq E(W_{l-1})$ as a consequence of Lemma \ref{lem:4d}. This finishes the proof of Theorem \ref{thm:main1p}.

\section{An application}\label{sec:app}
In this section, we prove Theorem \ref{thm:better}. 
To simplify the notations, we assume that there is only one energy concentration point $p\in \Sigma$ and that there are several real bubbles concentrated at $p$ and these bubbles are all separated. Hence, the real bubbles and the weak limit are connected with only one generalized neck domain. More precisely, take a conformal coordinate centered at $p$ and assume that the real bubbles are 
\[
\mathcal B_j: \quad (x_i^{(j)},\lambda_i^{(j)}), \qquad j=1,\cdots,l.
\]
By setting 
\begin{equation*}
	c_i=\frac{1}{l} \sum_{j=1}^l x_i^{(j)},
\end{equation*}
for some small $\delta_0>0$, the generalized neck domain is
\begin{equation*}
	\Omega_i= B(c_i,\delta_0) \setminus \bigcup_{j=1,\cdots,l} B(x_i^{(j)},\delta_0^{-1}\lambda_i^{(j)}).
\end{equation*}
We choose this $\delta_0$ to be small so that all results proved in previous sections hold. In what follows, we shall need another parameter $\delta\in (0,\delta_0)$ and set
\begin{equation}\label{eqn:ooo}
	\Omega_i(\delta)= B(c_i,\delta) \setminus \bigcup_{j=1,\cdots,l} B(x_i^{(j)},\delta^{-1}\lambda_i^{(j)}).
\end{equation}

The outline of the proof is the same as in \cite{yin2019higher}, which we recall below. 

\subsection{Outline of proof}
Let $u_i$ be the sequence in Theorem \ref{thm:old}. We shall define a sequence of conformal metrics $g_i$ on $\Sigma$. While $NI(u_i)$ is conformally invariant, the eigenvalues and eigenfunctions of the operator $J_{u_i}$ do depend on $g_i$. By taking a subsequence if necessary, we assume
\[
m=\lim_{i\to \infty} NI(u_i).
\]
Suppose that $\beta_{i,1},\cdots,\beta_{i,m}$ are the nonpositive eigenvalues (counting multiplicities) of $J_{u_i}$ and that $v_{i,1},\cdots,v_{i,m}$ are the corresponding eigenfunctions, i.e.
\begin{equation}
	J_{u_i}(v_{i,k})=\beta_{i,k} v_{i,k}\qquad k=1,\cdots,m.
	\label{eqn:eigen}
\end{equation}
Here $v_{i,k}$ are the sections of the pullback bundle $u_{i}^*TN$, which are normalized so that
\begin{equation}
	\int_M \langle v_{i,k}, v_{i,k'}\rangle dV_{g_i}=\delta_{k,k'}.
	\label{eqn:normalize}
\end{equation}
Notice that we have embedded $N$ into $\Real^p$ and hence $v_{i,k}$ are also regarded as $\Real^p$-valued functions that are perpendicular to the tangent space of $N$ at $u_i$.

We study the limit of $(\Sigma, g_i, u_i, \beta_{i,k}, v_{i,k})$. By our choice of $g_i$ (see below), we shall obtain a limit
\[
(\Sigma, g, u_\infty, \beta_{k}, v_{k}).
\]
Here $u_\infty$ is the weak limit of $u_i$ and $\beta_k$ and $v_k$ are the eigenvalues and eigenfunctions of $J_{u_\infty}$ respectively. For now, we do not know if they are linearly independent or not. This is a key issue that will be addressed later.

For each real bubble $\mathcal B_j (j=1,\cdots,l)$, we obtain a limit
\[
(\Real^2, g_b, \omega_j, \tilde{\beta}_k^{(j)}, \tilde{v}_k^{(j)}).
\]
For the definition of $g_b$, see \eqref{eqn:gb} in the next subsection.
Again, the eigenfunctions $\tilde{v}_k^{(j)}$ for $J_{\omega_j}$ may be linearly dependent.

To proved the desired inequality in Theorem \ref{thm:better}, we claim that
\begin{equation}
	\int_\Sigma \langle v_k,v_{k'} \rangle dV_g + \sum_{j=1}^l \int_{S^2} \langle \tilde{v}_k^{(j)}, \tilde{v}_{k'}^{(j)} \rangle dV_{g_b} = \delta_{k,k'}.
	\label{eqn:suppose}
\end{equation}
In a linear space with inner product, the dimension of the linear subspace spanned by $\alpha_1,\cdots,\alpha_m$ is the rank of the matrix
\[
\left( \langle \alpha_k, \alpha_{k'}\rangle \right)_{k,k'=1,\cdots,m}.
\]
Hence, Theorem \ref{thm:better} follows from \eqref{eqn:suppose}.

Intuitively, \eqref{eqn:suppose} is a consequence of \eqref{eqn:normalize}. For any fixed $\delta$, while the convergence on $\Sigma\setminus B(0,\delta)$ and $B(x_i^{(j)},\delta^{-1}\lambda_i^{(j)})$ is nice (see \cite{yin2019higher} for details), there is no control over the integral
\[
\int_{\Omega_i(\delta)} \langle v_{i,k}, v_{i,k'}\rangle dV_{g_i}.
\]
Therefore, the most important step in the proof of Theorem \ref{thm:better} is to show that
\[
\lim_{\delta\to 0}\lim_{i\to \infty}\int_{\Omega_i(\delta)} \langle v_{i,k}, v_{i,k'}\rangle dV_{g_i}=0,
\]
which is a consequence of

(T1) the volume of $\Omega_i(\delta)$ with respect to $g_i$ goes to zero when $\delta\to 0$;

(T2) there is some constant $C>0$ independent of $i$ such that
\[
\sup_{\Omega_i(\delta_0/16)} \abs{v_{i,k}} \leq C.
\]

\subsection{Metric $g_i$ on the generalized neck domain}
For the definition of $g_i$, we first define a metric on $\Real^2$ as follows
\begin{equation}
	\label{eqn:gb}
g_b=f(r) (dr^2 + r^2 d\theta^2)
\end{equation}
where $(r,\theta)$ is the polar coordinates and 
\[
f(r)=\left\{
\begin{array}[]{ll}
	\left( \frac{1}{1+r^2} \right)^2 & r\leq 1 \\
	\frac{1}{r^4} & r>2.
\end{array}
\right.
\]
This is supposed to be the limit of $g_i$ on each real bubble domain and it is to be connected to the $\Sigma$ by $\tilde{g}_i$ defined on the generalized neck domain, which is defined as (see \eqref{eqn:tildegi})
\begin{equation*}
	\tilde{g}_i= \left( 1+ \sum_{j=1}^l \frac{(\lambda_i^{(j)})^2}{ \abs{z-x_i^{(j)}}^4}\right) dz\wedge d\bar{z}.
\end{equation*}
With a cut-off function $\varphi:[0,+\infty)\to [0,1]$ satisfying $\varphi(x)\equiv 0$ for $s\leq 1$ and $\varphi(s)\equiv 1$ for $s\geq 2$, we define
\begin{equation}
	g_i(z)=\left\{
	\begin{array}[]{ll}
		g(z) & \text{on} \quad \Sigma \setminus B(c_i,\delta_0/2)\\
		\varphi(\frac{4\abs{z-c_i}}{\delta_0}) g + (1- \varphi(\frac{4\abs{z-c_i}}{\delta_0})) \tilde{g}_i & \text{on} \quad B(c_i,\delta_0/2)\setminus B(c_i,{\delta_0/4}) \\
		\tilde{g}_i(z) & \text{on} \quad \Omega_i(\delta_0/4) \\
		\varphi(\frac{\abs{z-x_i^{(j)}} \delta_0}{2 \lambda_i^{(j)}}) \tilde{g}_i + (1- \varphi(\frac{\abs{z-x_i^{(j)}}\delta_0}{2\lambda_i^{(j)}})) (L_l^{(j)})^* g_b & \text{on} \quad B(x_i^{(j)},4\delta_0^{-1}\lambda_i^{(j)})\setminus B(x_i^{(j)},2\delta_0^{-1}\lambda_i^{(j)}) \\
		(L_i^{(j)})^* g_b & \text{on} \quad B(x_i^{(j)},2\delta_0^{-1}\lambda_i^{(j)})
	\end{array}
	\right.
	\label{eqn:metricgi}
\end{equation}
where $j=1,\cdots,l$ and $L_i^{(j)}:\Real^2\to \Real^2$ maps $z$ to $\frac{z-x_i^{(j)}}{\lambda_i^{(j)}}$.

With this definition, it is straight forward to show an analog of Lemma 5.2 in \cite{yin2019higher}, from which (T1) follows.

\begin{lem}
	\label{lem:52} For any $\delta\in (0,\delta_0)$, we have, when $i\to \infty$,

	(1) $g_i$ converges to $g$ on $\Sigma\setminus B(c_i,\delta)$;

	(2) for each $j=1,\cdots,l$, $((L_i^{(j)})^{-1})^*g_i$ converges to $g_b$ on $B(0,\delta^{-1})$;

	(3) The volume of $\Omega_i(\delta)$ with respect to $g_i$ is bounded by $C \delta^2$ for some universal constant $C>0$.
\end{lem}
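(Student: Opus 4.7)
My plan is to verify all three items by direct inspection of the cutoff definition \eqref{eqn:metricgi}, with the main analytic input being the behavior of the conformal factor $1+\sum_j (\lambda_i^{(j)})^2/|z-x_i^{(j)}|^4$ of $\tilde g_i$ in three distinct regimes: it tends to $1$ at fixed positive distance from every bubble, it rescales to $|w|^{-4}$ around each individual bubble, and it integrates to $O(\delta^2)$ over the generalized neck domain. Claims (1) and (2) record the first two regimes, and claim (3) records the third.

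For (1), I would first note that $g_i \equiv g$ on $\Sigma \setminus B(c_i, \delta_0/2)$, so only the fixed transition annulus $B(c_i, \delta_0/2) \setminus B(c_i, \delta_0/4)$ requires attention. On it, $|z - x_i^{(j)}|$ is uniformly bounded away from $0$ (because $x_i^{(j)} \to 0$), so each summand in $\sum_j(\lambda_i^{(j)})^2/|z-x_i^{(j)}|^4$ tends to $0$ smoothly; combined with the fixed cutoff this yields the smooth convergence of $g_i$ to $g$ away from $c_i$ in the sense required.

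For (2), I would rescale by $w = (z - x_i^{(j)})/\lambda_i^{(j)}$. On the inner disk $\{|w| \le 2\delta_0^{-1}\}$ one has $((L_i^{(j)})^{-1})^* g_i = g_b$ identically by construction. In the remainder of $B(0,\delta^{-1})$ the pulled-back conformal factor of $\tilde g_i$ equals
\begin{equation*}
(\lambda_i^{(j)})^2 \;+\; \frac{1}{|w|^4} \;+\; \sum_{k \ne j} \frac{(\lambda_i^{(j)} \lambda_i^{(k)})^2}{|\lambda_i^{(j)} w + x_i^{(j)} - x_i^{(k)}|^4}.
\end{equation*}
The first term tends to $0$, the middle term matches $f(|w|)$ for $|w|>2$, and each cross term is $O\bigl((\lambda_i^{(j)}\lambda_i^{(k)})^2 / |x_i^{(j)}-x_i^{(k)}|^4\bigr)$, which tends to $0$ by assumption (O2). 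Folding this through the cutoff gluing with $(L_i^{(j)})^* g_b$ gives the claimed smooth convergence on $B(0,\delta^{-1})$.

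For (3), take $\delta < \delta_0/4$ (the larger $\delta$ case is trivial because the volume of $\Omega_i(\delta)$ is already bounded in terms of $\delta_0$ alone). Then $g_i = \tilde g_i$ on $\Omega_i(\delta)$, so
\begin{equation*}
\mathrm{Vol}_{g_i}(\Omega_i(\delta)) \le \pi \delta^2 + \sum_{j=1}^l \int_{|z - x_i^{(j)}| \ge \delta^{-1}\lambda_i^{(j)}} \frac{(\lambda_i^{(j)})^2}{|z-x_i^{(j)}|^4}\, dA = (1+l)\pi\delta^2,
\end{equation*}
where each summand is evaluated by a one-line polar-coordinate computation. Since $l$ is fixed, the right-hand side is $\le C\delta^2$. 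I do not foresee a serious obstacle; the only delicate point is the bookkeeping in (2), specifically controlling the cross terms with $k \ne j$, and this is precisely where assumption (O2) is essential.
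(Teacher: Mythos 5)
Your argument is essentially the right one, and since the paper itself omits the proof of this lemma (it only remarks that it is a straightforward analog of Lemma 5.2 of \cite{yin2019higher}), there is no competing exposition to compare against. The key computations you identify --- the $k\ne j$ cross-terms in the pulled-back conformal factor are $O\bigl((\lambda_i^{(j)}\lambda_i^{(k)})^2/|x_i^{(j)}-x_i^{(k)}|^4\bigr)$, which vanish by (O2) since $|x_i^{(j)}-x_i^{(k)}| \ge R(\lambda_i^{(j)}+\lambda_i^{(k)})$ for any $R$ and $i$ large, and the polar integral giving $\pi\delta^2$ per summand in (3) --- are exactly what is needed.

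One small bookkeeping slip in your treatment of (1): you assert that once $g_i\equiv g$ on $\Sigma\setminus B(c_i,\delta_0/2)$ only the transition annulus $B(c_i,\delta_0/2)\setminus B(c_i,\delta_0/4)$ remains. That is true only when $\delta\ge\delta_0/4$. For $\delta<\delta_0/4$ the set $\Sigma\setminus B(c_i,\delta)$ also contains the punctured region $B(c_i,\delta_0/4)\setminus B(c_i,\delta)$ on which $g_i=\tilde g_i$ by \eqref{eqn:metricgi}. The same reasoning applies there (each $(\lambda_i^{(j)})^2/|z-x_i^{(j)}|^4\to 0$ uniformly since $|z-x_i^{(j)}|$ is bounded below by a fixed multiple of $\delta$ for large $i$), but you should say so. Note also that on that region the limit of $\tilde g_i$ is the flat metric $dz\wedge d\bar z$; the statement of the lemma implicitly takes $g$ to be Euclidean in the conformal coordinate near $p$ (otherwise (1) would literally fail for $\delta<\delta_0/4$, and \eqref{eqn:metricgi} would already be discontinuous across $|z-c_i|=\delta_0/4$), so your computation does match the intended meaning once this convention is made explicit.
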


As explained in Section \ref{sub:example}, $\tilde{g}_i$ is the pullback metric of some minimal embedding. In \cite{yin2019higher}, on a simple neck domain (or a cylinder), an explicit parametrization of the catenoid in $\Real^3$ was used. Here, we use the sequence $u_i$ given in \eqref{eqn:defui}. The following mean value inequality is a generalization of Lemma 5.3 of \cite{yin2019higher}.

\begin{lem}
	\label{lem:53}
	For any positive number $C_1>0$, there is $C_2$ depending on $C_1$ but not $i$ such that if a nonnegative function $w$ satisfies
	\begin{equation*}
		\triangle g_i w \geq -C_1 w, \qquad \text{on \quad} \Omega_i(\delta_0/8)
	\end{equation*}
then for sufficiently large $i$,
\[
\sup_{\Omega_i(\delta_0/16)}w \leq C_2 \int_{\Omega_i(\delta_0/8)} w dV_{g_i}.
\]
\end{lem}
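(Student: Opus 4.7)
The essential structural point is that on $\Omega_i(\delta_0/8)$ the metric $g_i$ coincides with $\tilde g_i$, which by Section~\ref{sub:example} is the pullback under the holomorphic map $\tilde u_i$ of \eqref{eqn:defui}. Since holomorphic maps into Euclidean space are conformal and harmonic, $(\Omega_i, \tilde g_i)$ is isometrically realized as a $2$-dimensional minimal submanifold of $\mathbb C^{l+1} \cong \mathbb R^{2l+2}$. This realization replaces the explicit catenoid parametrization used in Lemma~5.3 of \cite{yin2019higher} for the simple neck case, and is exactly what will produce a mean value inequality with constants independent of $i$.

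Given this realization, my plan is to invoke the Michael--Simon mean value inequality for nonnegative subsolutions on minimal submanifolds of Euclidean space. It furnishes constants $C, r_0 > 0$, depending only on $C_1$ and the ambient dimension, such that for every $p \in \Omega_i(\delta_0/8)$ whose intrinsic $\tilde g_i$-ball $B_{r_0}(p)$ is contained in $\Omega_i(\delta_0/8)$,
\[
w(p) \leq \frac{C}{r_0^2} \int_{B_{r_0}(p)} w\, dV_{\tilde g_i}.
\]
Because the Michael--Simon Sobolev constant is universal across minimal surfaces, the Moser iteration it drives produces a mean value constant uniform in $i$.

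To conclude, it suffices to exhibit a single $r_0 > 0$ that works for every $p \in \Omega_i(\delta_0/16)$, which reduces to a uniform lower bound on the intrinsic $\tilde g_i$-distance between $\partial \Omega_i(\delta_0/16)$ and $\partial \Omega_i(\delta_0/8)$. Near the outer boundary, the constant term $1$ in the conformal factor $\omega(z) = 1 + \sum_j (\lambda_i^{(j)})^2/|z-x_i^{(j)}|^4$ forces $\tilde g_i$ to dominate the Euclidean metric, so the intrinsic distance is at least $\delta_0/16$. Near an inner boundary $\{|z - x_i^{(j)}| = 16\delta_0^{-1}\lambda_i^{(j)}\}$, I use only the bubble-specific term of $\omega$ and compute
\[
\int_{8\delta_0^{-1}\lambda_i^{(j)}}^{16\delta_0^{-1}\lambda_i^{(j)}} \frac{\lambda_i^{(j)}}{\rho^2}\, d\rho = \frac{\delta_0}{16};
\]
the arclength of any path from the inner to the outer circle is at least its radial component, so the intrinsic distance is bounded below by $\delta_0/16$. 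Taking $r_0$ smaller than this common lower bound, the mean value inequality applies at every $p \in \Omega_i(\delta_0/16)$, and the monotonicity $\int_{B_{r_0}(p)} w \leq \int_{\Omega_i(\delta_0/8)} w$ yields the stated $L^\infty$--$L^1$ bound.

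The main obstacle I anticipate is keeping the mean value constant genuinely independent of $i$, since the Gauss curvature of $\tilde g_i$ blows up as one approaches a bubble point, and an approach via curvature-dependent comparison geometry on $(\Omega_i, \tilde g_i)$ would fail to be uniform. The minimal-surface viewpoint circumvents this issue entirely: the Michael--Simon inequality requires no curvature bound, only minimality of the ambient immersion, which is exactly what holomorphicity of $\tilde u_i$ supplies.
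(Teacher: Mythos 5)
Your central idea is exactly the paper's: on $\Omega_i(\delta_0/8)$ the metric $g_i$ equals $\tilde g_i$, which is the pullback of the holomorphic (hence minimal) embedding $\tilde u_i$ of \eqref{eqn:defui}, and one then invokes a mean value inequality for subsolutions on minimal submanifolds of Euclidean space whose constant is independent of $i$. The paper cites the \emph{extrinsic}-ball mean value inequality (Corollary~1.16 of \cite{colding2011course}) and verifies the extrinsic-ball condition $\hat B(y,\delta_0/32)\cap\tilde u_i(\partial\Omega_i(\delta_0/8))=\emptyset$ by letting $i\to\infty$ and observing that the images converge to the coordinate-axis cross $\tilde\Omega(\delta)\subset\mathbb C^{l+1}$, where the separation of the $l+1$ arms makes the distance estimate immediate. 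You instead verify a lower bound on the \emph{intrinsic} $\tilde g_i$-distance between $\partial\Omega_i(\delta_0/16)$ and $\partial\Omega_i(\delta_0/8)$.

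The one caveat is that both the original Michael--Simon mean value inequality and the Colding--Minicozzi version are stated for extrinsic balls, and an intrinsic distance bound does not in general imply the corresponding extrinsic one (intrinsic distance dominates extrinsic distance, so a boundary point far away intrinsically can still be close extrinsically). Your argument therefore rests on an intrinsic-ball form of the mean value inequality. This version is correct --- it follows from the Michael--Simon Sobolev inequality plus Moser iteration, and the needed volume lower bound $\mathrm{vol}(B_{r_0}(p))\geq\pi r_0^2$ holds because minimal surfaces in Euclidean space have nonpositive Gauss curvature --- but it is a slightly nonstandard statement and deserves either a citation (e.g.\ Bombieri--Giusti's Harnack inequality on minimal surfaces) or a short derivation. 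Alternatively, you could upgrade your intrinsic estimate to the extrinsic one the standard references require: since the limit image consists of $l+1$ segments along distinct coordinate axes, the extrinsic distance between the two boundary sets is also bounded below by $\delta_0/16$ for large $i$, which is exactly the route the paper takes.
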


\begin{proof}
	Recall that the metric $g_i$ restricted to $\Omega_i(\delta_0/8)$ is the same as $\tilde{g}_i$ and $\tilde{g}_i$ is the pullback metric by $\tilde{u}_i$ defined in \eqref{eqn:defui}. Since $\tilde{u}_i$ parametrizes a minimal surface in $\mathbb C^{l+1}$, the classical mean value inequality (see Corollary 1.16 of \cite{colding2011course}) implies that for any $y\in \tilde{u}_i(\Omega_i(\delta_0/16))$,
	\begin{equation*}
		w(y)\leq C_2 \int_{\hat{B}(y,\frac{\delta_0}{32})\cap \tilde{u}_i(\Omega_i(\delta_0/8))} w dV_\Sigma,
	\end{equation*}
	as long as we verify that 
	\begin{equation}
	\hat{B}(y,\frac{\delta_0}{32})\cap  \tilde{u}_i( \partial\Omega_i(\delta_0/8))=\emptyset.
		\label{eqn:verify}
	\end{equation}
	Here $dV_\Sigma$ is the induced metric on the image of $\tilde{u}_i$ and $\hat{B}$ is the metric ball in $\mathbb C^{l+1}$. 
	To see that \eqref{eqn:verify} holds for large $i$, we consider the limit of $\tilde{u}_i(\Omega_i(\delta))$ as a subset in $\mathbb C^{l+1}$, which by \eqref{eqn:defui} and \eqref{eqn:ooo} is
	\begin{equation*}
		\tilde{\Omega}(\delta):=\bigcup_{j=1,\cdots,l+1} \set{(z_1,\cdots,z_{l+1})|\quad \abs{z_j}\leq \delta, \quad z_k=0 \quad \text{for} \quad k\ne j}.
	\end{equation*}
	Hence, the boundary of $\tilde{\Omega}(\delta_0/8)$ is
\[
\bigcup_{j=1,\cdots,l+1} \set{(z_1,\cdots,z_{l+1})|\quad \abs{z_j}= \delta_0/8, \quad z_k=0 \quad \text{for} \quad k\ne j},
\]
whose distance to $\tilde{u}_i(\Omega_i(\delta_0/16))$ is greater than $\delta_0/32$. 
\end{proof}

With Lemma \ref{lem:53}, we may prove (T2) as Lemma 5.7 \cite{yin2019higher}. Notice that in this proof, we used Theorem \ref{thm:main} in the form that
\begin{equation*}
	\sup_{\Omega_i(\delta_0)}\norm{\nabla u_i}_{\tilde{g}_i} \leq C.
\end{equation*}

With (T1) and (T2), the rest of the proof is the same as in \cite{yin2019higher}.

\appendix
\section{Some properties of an ODE solution}

In this appendix, we show some elementary properties of the solution $g(t)$ to the ordinary differential equation with boundary values
\begin{equation*}
	g''(t)=\gamma^2 g(t),\qquad g(0)=a \quad \text{and} \quad g(T)=b.
\end{equation*}
We assume that $\gamma>1/2$ and $T>5$. They are not essential and we assume these for simplicity.

\begin{lem}
	\label{lem:app} There is a universal constant $\tilde{c}$ such that for any positive constants $a$ and $b$ with $b\geq a$, we have
	\begin{equation}
		\sup_{[1,T]}\abs{(\log g)'} \leq \tilde{c}
		\label{eqn:universal}
	\end{equation}
	and
	\begin{equation}
		a\leq \tilde{c} \inf_{[0,1]} g.
		\label{eqn:toleft}
	\end{equation}
\end{lem}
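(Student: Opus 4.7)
My plan is to argue directly from the explicit solution of the linear ODE. Writing $g(t) = A e^{\gamma t} + B e^{-\gamma t}$ and imposing the boundary conditions gives
\[
A = \frac{b - a e^{-\gamma T}}{e^{\gamma T} - e^{-\gamma T}}, \qquad B = \frac{a e^{\gamma T} - b}{e^{\gamma T} - e^{-\gamma T}}.
\]
Since $b \ge a$ and $e^{-\gamma T} < 1$, we get $A > 0$ immediately, while $B$ can have either sign depending on whether $b \lessgtr a e^{\gamma T}$. All estimates then reduce to bounding ratios of the two exponential terms on the relevant interval, which will be handled case-by-case.

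For \eqref{eqn:universal}, a direct computation gives $(\log g)'(t) = \gamma (A e^{\gamma t} - B e^{-\gamma t})/(A e^{\gamma t} + B e^{-\gamma t})$. When $B \ge 0$, this is $\gamma$ times a hyperbolic-tangent-type quantity bounded in absolute value by $1$, so $|(\log g)'| \le \gamma$ everywhere. When $B < 0$ (the case $b > a e^{\gamma T}$), I will observe that $|B|/A = (b - a e^{\gamma T})/(b - a e^{-\gamma T}) < 1$; hence for $t \ge 1$ the cross-ratio satisfies $|B| e^{-\gamma t}/(A e^{\gamma t}) \le e^{-2\gamma}$, giving the clean bound $|(\log g)'| \le \gamma (1 + e^{-2\gamma})/(1 - e^{-2\gamma})$ on $[1,T]$.

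For \eqref{eqn:toleft}, the strategy is to locate the minimum of $g$ on $[0,1]$ using the critical point $t^\ast = \frac{1}{2\gamma}\log(B/A)$. If $B \le 0$ the function $g$ is strictly increasing (since $g' = \gamma(A e^{\gamma t} + |B| e^{-\gamma t}) > 0$), so $\inf_{[0,1]} g = g(0) = a$ and the inequality is trivial. If $B > 0$ and $t^\ast \le 0$, the same monotonicity conclusion holds on $[0,1]$. If $t^\ast \in (0,1)$, then $\inf g = 2\sqrt{AB}$ and the ratio $a/\inf g = (A+B)/(2\sqrt{AB}) = \tfrac12(\sqrt{A/B} + \sqrt{B/A})$ is bounded by $\cosh(\gamma)$ because $B/A \le e^{2\gamma}$. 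Finally, if $t^\ast \ge 1$ then $\inf g = g(1)$, and setting $r = B/A \ge e^{2\gamma}$ the quotient $(1+r)/(e^\gamma + r e^{-\gamma})$ is monotone in $r$ and bounded above by $e^\gamma$.

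The computations themselves are elementary; the only mildly delicate point is ensuring the case analysis for \eqref{eqn:toleft} covers all positions of the interior minimum relative to the interval $[0,1]$, and checking that the three exhaustive subcases ($t^\ast \le 0$, $t^\ast \in (0,1)$, $t^\ast \ge 1$) each produce a constant depending only on $\gamma$. Since the assumptions $\gamma > 1/2$ and $T > 5$ are only used to avoid trivial degeneracies (and $\gamma$ appears as a fixed constant in every application, namely $\sqrt{9/10}$ or $\sqrt{9/5}$ in Proposition \ref{prop:sharp}), the resulting $\tilde{c}$ is universal in the sense required.
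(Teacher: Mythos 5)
Your proof is correct, and the general strategy is the same as the paper's (explicit solution plus case analysis on the sign of the $e^{-\gamma t}$-coefficient $B$), but the bookkeeping differs in a way worth noting. For \eqref{eqn:universal} in the case $B<0$, the paper first shows $(\log g)''\le 0$ and $(\log g)'(T)\in(0,2\gamma)$, then reduces to evaluating $(\log g)'(1)$ by an explicit computation that uses both $\gamma>1/2$ and $T>5$; you instead isolate the clean algebraic fact $|B|/A=(b-ae^{\gamma T})/(b-ae^{-\gamma T})<1$ and use it to bound the cross-ratio $|B|e^{-2\gamma t}/A\le e^{-2\gamma}$ uniformly for $t\ge 1$, giving $|(\log g)'|\le \gamma\coth(\gamma)$ directly, without concavity and without the hypothesis $T>5$. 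For \eqref{eqn:toleft}, the paper handles only two cases (in case (i) it deduces the bound indirectly from $|(\log g)'|\le\gamma$; in case (ii) from monotonicity), while you carry out a finer case split on the location of the critical point $t^*=\frac{1}{2\gamma}\log(B/A)$ and compute the infimum explicitly ($2\sqrt{AB}$ when $t^*\in(0,1)$, $g(1)$ when $t^*\ge 1$), which produces the sharper constants $\cosh\gamma$ and $e^\gamma$ respectively. Both arguments are elementary and equivalent in strength; yours is slightly more self-contained in that it does not invoke $T>5$ at all, whereas the paper's has the small advantage of a shorter case list. One small inaccuracy in your closing remark: the appendix lemma is applied in Proposition~\ref{prop:sharp} only with $\gamma=\sqrt{9/10}$ (for the comparison function $h$); the value $\sqrt{9/5}$ belongs to $H$, whose comparison $h^2\le H$ is handled separately and does not go through Lemma~\ref{lem:app}.
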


\begin{rem}
	In general, since $b$ may be very large, even a lot larger than $e^{\gamma T}a$, we can not expect an upper bound of $(\log g)'$ over $[0,T]$. The observation is that such an upper bound holds for $[1,T]$ regardless of the size of $a$, $b$ and $T$.
\end{rem}

The proof follows from explicit computation, since we have the following formula for the solution
\begin{equation}\label{eqn:g}
	g(t)=\frac{a e^{2\gamma T}- be^{\gamma T}}{e^{2\gamma T}-1} e^{-\gamma t} + \frac{b e^{\gamma T}-a}{e^{2\gamma T}-1} e^{\gamma t}.
\end{equation}
Direct computation shows
\begin{equation}\label{eqn:gp}
	g'(t)=- \gamma \frac{a e^{2\gamma T}- be^{\gamma T}}{e^{2\gamma T}-1} e^{-\gamma t} + \gamma \frac{b e^{\gamma T}-a}{e^{2\gamma T}-1} e^{\gamma t}
\end{equation}
and
\begin{equation}\label{eqn:loggp}
	\frac{g'(t)}{g(t)}= \gamma \frac{(be^{\gamma T}-a) e^{\gamma t} - (a e^{2\gamma T}-b e^{\gamma T}) e^{-\gamma t}}{(be^{\gamma T}-a) e^{\gamma t} + (a e^{2\gamma T}-b e^{\gamma T}) e^{-\gamma t}}.
\end{equation}
Taking one more derivative, we get
\begin{equation}\label{eqn:loggdoublep}
	\left( \frac{g'}{g} \right)' = \gamma^2 \frac{4 (be^{\gamma T}-a)(a e^{2\gamma T}-b e^{\gamma T})}{((be^{\gamma T}-a) e^{\gamma t} + (a e^{2\gamma T}-b e^{\gamma T}) e^{-\gamma t})^2}.	
\end{equation}

(i) If $a$ and $b$ are comparable in the sense that $a\leq b\leq e^{\gamma T}a$, the lemma holds with $\tilde{c}=\gamma$. To see this, we notice that in this case
\begin{equation*}
	{a e^{2\gamma T}- be^{\gamma T}}, {b e^{\gamma T}-a}\geq 0.
\end{equation*}
Hence, by \eqref{eqn:loggp}, we have $\abs{(\log g)'}\leq \gamma$ for all $t\in [0,T]$, from which both \eqref{eqn:universal} and \eqref{eqn:toleft} follow.

(ii) If $b\geq e^{\gamma T}a$, \eqref{eqn:gp} implies that $g'\geq 0$.
Hence, $g$ is increasing and \eqref{eqn:toleft} follows.

 Moreover, \eqref{eqn:loggdoublep} implies that $(\log g)''\leq 0$.
 Together with the observation
\begin{equation*}
	(\log g)' (T) = \gamma \frac{b(e^{2\gamma T}+1)-2ae^{\gamma T}}{b(e^{2\gamma T}-1)} \in (0,2\gamma),
\end{equation*}
 it suffices to bound $(\log g)'(1)$, which we compute
\begin{eqnarray*}
	(\log g)'(1) &=& \gamma \frac{(b e^{\gamma T}-a) e^\gamma - (a e^{2\gamma T}-b e^{\gamma T})e^{-\gamma}}{(b e^{\gamma T}-a) e^\gamma + (a e^{2\gamma T}-b e^{\gamma T})e^{-\gamma}} \\
	&=& \gamma \frac{b e^{\gamma(T+1)} + b e^{\gamma(T-1)} - a e^\gamma - ae^{\gamma (2T-1)}}{b e^{\gamma(T+1)} - b e^{\gamma(T-1)} - a e^\gamma + a e^{\gamma (2T-1)}}\\
	&\leq & \gamma \frac{b e^{\gamma(T+1)} + b e^{\gamma(T-1)}}{b e^{\gamma(T+1)} - b e^{\gamma(T-1)} - a e^\gamma}.
\end{eqnarray*}
Finally, we notice that
\begin{equation*}
	a e^\gamma \leq b e^{-\gamma T + \gamma},
\end{equation*}
which implies that
\begin{equation*}
	(\log g)'(1) \leq \frac{2 \gamma}{1- e^{-2\gamma}- e^{-2\gamma T}} \leq 4\gamma.
\end{equation*}
This concludes the proof of the lemma.

\bibliographystyle{alpha}

\bibliography{foo}
\end{document}